\documentclass [a4paper, 11pt]{article}
\usepackage{amsthm,amsmath,amssymb}
\usepackage{mathrsfs}
\usepackage{enumerate}
\usepackage{amsfonts}
\usepackage{verbatim}
\usepackage{amsbsy}
\usepackage{amsmath}
\usepackage{amssymb}
\usepackage{MnSymbol}
\usepackage{mathrsfs}
\usepackage{graphicx}
\usepackage{caption}
\usepackage{cite}
\usepackage{array}
\usepackage{stackengine}
\usepackage[table,xcdraw]{xcolor}
\usepackage{colortbl}
\usepackage{multirow}
\usepackage{booktabs,xltabular}
\usepackage{graphicx}
\usepackage{chemfig}
\sloppy
\newcolumntype{P}[1]{>{\centering\arraybackslash}p{#1}}
\newcolumntype{M}[1]{>{\centering\arraybackslash}m{#1}}
\usepackage{soul}

\setcounter{secnumdepth}{5}
\setchemfig{atom sep=15pt}

\voffset=-1.5cm  
\hoffset=-2.0cm 
\textwidth=170mm
\textheight=240mm

\parindent 0pt
\parskip \bigskipamount

\DeclareMathOperator{\sig}{sig}
\DeclareMathOperator{\dist}{dist}
\DeclareMathOperator{\rt}{root}
\DeclareMathOperator{\diam}{diam}
\usepackage[symbol]{footmisc}

\title {Extremal Trees With Prescribed Burning Numbers} 
\author{\stepcounter{footnote}Eugene Jun Tong Leong \thanks{School of Mathematical Sciences, Universiti Sains Malaysia, 11800 USM, Malaysia. Email: EugeneLeong@student.usm.my} \and Kai An Sim\thanks{School of Mathematical Sciences, Sunway University, 47500 Bandar Sunway, Malaysia. Email: kaians@sunway.edu.my} \and Wen Chean Teh \!$^*$ \thanks{School of Mathematical Sciences, Universiti Sains Malaysia, 11800 USM, Malaysia. Email: dasmenteh@usm.my} }
\date{}

\theoremstyle{plain}
\newtheorem{theorem}{Theorem}[section]

\newtheorem{lemma}[theorem]{Lemma}
\newtheorem{corollary}[theorem]{Corollary}

\newtheorem*{burningconjecture}{Burning number conjecture}
\theoremstyle{definition}
\newtheorem{definition}[theorem]{Definition}

\theoremstyle{remark}
\newtheorem{remark}[theorem]{Remark}

\begin{document}
	\maketitle
	\begin{abstract}
		Graph burning is motivated by the spread of social influence, and the burning number measures the speed of the spread. Given that the smallest burning number among the spanning trees of a graph determines the burning number of a connected graph, trees are the main objects of investigation in graph burning.
Given a prescribed burning number, our study focuses on identifying the corresponding extremal trees with respect to order up to graph homeomorphism. In this work, we propose the concept of admissible sequences over a homeomorphically irreducible tree in addition to developing a general framework. We then determine whether an admissible sequence induces an extremal tree with a specified burning number. Additionally, we obtain some results on the smallest attainable diameter for extremal $n$-spiders with a prescribed burning number. 		
	\end{abstract}

	\noindent
	\footnotesize 2020 \textit {Mathematics Subject Classification.} 05C85, 68R10.\\
	\noindent
	\footnotesize \textit {Key words and phrases.} Extremal tree; admissible sequence;   graph burning;  discrete algorithm; social network;   homomeophically irreducible \\
	\footnotesize $^*$Corresponding author
	\normalsize
	
	\section{Introduction}
	
	Bonato et al.~\cite{bonato2016how} proposed graph burning to measure how quickly a social influence spreads in a social network. The concept of graph burning has similarities to the firefighter problem\cite{firefighter}, where the firefighters are positioned at selected vertices to prevent the outspread of the``fire". Graph burning, on the other hand, aims to minimize the necessary number of steps needed to set all vertices of a graph on ``fire" in a discrete deterministic process. During the burning process, each vertex is in one of two states: \emph{burned} or \emph{unburned}. At the initial round $t=0$, all vertices are unburned. For each subsequent round $t\geq 1$, if an unburned vertex is adjacent to another already burned vertex, then its status turns to burned automatically; otherwise, it can be chosen as the sole candidate to be burned in this round. Once a vertex becomes burned, this status is maintained throughout the remainder of the burning process. The process continues until no vertices remain unburned, and we refer to the graph as \textit{burned}.
	
	Suppose a graph $G$ is burned in $m$ rounds. Let $x_i$ represent the vertex chosen for burning in the $i$-th round, where $1 \leq i \leq m$. We define the sequence ($x_1, x_2,  \dots, x_m$) as the \emph{burning sequence} for the graph $G$, where each $x_i$ is called a \textit{burning source}. The \textit{burning number} of $G$ is defined as the least number of rounds required for all vertices of $G$ to be burned, and it is denoted by $b(G)$. If a graph $G$ can be completely burned in at most $m$ rounds (i.e.~$b(G)\leq m$), then it is said to be \textit{$m$-burnable}. 
	
	Burning number has been extensively studied for many families of graphs. Bonato et al.~\cite{bonato2016how} have shown that the burning numbers of every connected graph would be the smallest among the burning numbers of its spanning trees. They further proved that for all cycles and paths with $n$ vertices is $\left\lceil \sqrt{n}\right\rceil$ and proposed the following conjecture:\\
	
	\begin{burningconjecture} \cite{bonato2016how}
		If $G$ is a connected graph, then $b(G) \leq \left\lceil \sqrt{\vert V(G)\vert}\right\rceil$.
	\end{burningconjecture}

	The burning number conjecture is repeatedly verified for specific families of graphs, for example, theta graphs \cite{liu2019burning},  \textit{t}-unicyclic graphs\cite{zhang2022burning},   circulant graphs \cite{burningcirculant2017}, grid and interval graphs\cite{gupta2021burning}, gear graph and sun graph\cite{geargraph2025},    generalised Petersen graphs \cite{sim2018burning}, caterpillars \cite{liu2020burning,hiller2021burning}, and path forests \cite{liu2021burning,bonato2019bounds,das2018burning}. 
	As for spiders, it garnered the attention of Bonato and Lidbetter \cite{bonato2019bounds} and Das et al.~\cite{das2018burning} to prove independently that these simple trees obey the burning number conjecture. Later, Tan and Teh \cite{tan2020graph} showed that the order of a spider can be pushed plainly beyond $m^2$ while guaranteeing $m$-burnability and a tight upper bound was established. They have also studied the burnability of double spiders and path forests in \cite{tanteh2022doublespider}. More recently, Devroye et al.~\cite{devroye2025burning} showed that random trees have a significantly smaller burning number bound on the order of $n^{1/3}$.
	
	The NP-completeness of the graph burning problem was established for several graph classes such as spiders, path forests, general graphs~\cite{bessy2017burning}, and directed trees~\cite{janssen2020burning}. Meanwhile, improved upper bounds on the burning number in general are well sought after. Initially, for any connected graph $G$ with order $n$,	Bessy et al.~\cite{bessy2018bounds} established an upper bound of $b(G) \leq \sqrt{\frac{12n}{7}}+3$. This upper estimate was subsequently enhanced by Land and Lu~\cite{land2016upper} to $\left\lceil \frac{-3+\sqrt{24n+33}}{4}\right\rceil$. Bastide et al.~\cite{bastide2022improved} recently obtained an improved upper bound of $\left\lceil \frac{8+\sqrt{12n + 64}}{3}\right\rceil$. Almost simultaneously, Norin and Turcotte \cite{burning2022holdasymototically} made a remarkable contribution in establishing that the burning number conjecture holds asymptotically, that is, $b(G)\leq (1+o(1))\sqrt{n}$. Meanwhile, Murakami~\cite{murakami2024conjecture} showed that the burning number conjecture holds for the class of trees that contain no vertices of degree two.
	
	Aside from focusing on the graph burning problem and the burning number conjecture, some researchers work on variations of the burning number. The variation $k$-burning problem was initially investigated by \cite{mondal2021apxhardness}. This variation allows $k$ vertices to be chosen each round where the classical burning is where $k=1$. Later, the $k$-burning problem was shown to be NP-complete for spiders and permutation graphs \cite{GORAIN202383}. Furthermore, the upper bounds and approximation algorithms of the $k$-slow burning problem, a variation that limits the number of neighbours a burning vertex may burn every round, have been studied~\cite{hiller2025upper}. Additionally, graph burning is also studied from the perspectives of parameterized complexity~\cite{kare2019parameterized, kobayashi2022parameterized} and randomness~\cite{mitsche2017burning}. Approximation algorithms for graph burning have also been proposed in \cite{bonato2019approximation}. In 2023, an oriented variation of burning number was studied, where given an undirected graph, the aim is to turn it into a directed graph such that the burning process is prolonged~\cite{courtiel2023orientable}.
	
	In this work, given a pre-determined burning number,	we identify the corresponding extremal trees with the largest attainable order through the introduction of admissible sequences. In Section~\ref{Preliminaries}, we begin with some terminology and then present some general properties of graph burning along with a slight extension of our prior work regarding maximal trees that possess a designated burning number. In the subsequent section, the concept of admissible sequences over a homeomorphically irreducible tree is explained. We show that any tree with a stated burning number whose order is the largest in its homeomorphic class is induced by some admissible sequences (see Theorem \ref{3singleneigbourhood}), and trees induced by admissible sequences with the same characteristic possess the same order (see Lemma~\ref{samesigandorder}). Therefore, comparison among suitable admissible sequences is an effective method to identify the extremal trees. In the following section, we illustrate how this framework involving admissible sequences and the results in Section~\ref{T-admissiblesequence} are applicable to identify the extremal trees with four branch vertices. Finally, we study the smallest diameter attainable by an $n$-spider, when it is required to have a prescribed burning number and its order is the largest among $n$-spiders with equal burning numbers.  

	\section{Preliminaries}\label{Preliminaries}
	In this work, we consider only simple, finite, and undirected graphs. Let $G$ be such a graph. The \textit{distance} between the vertices $u$ and $v$ of $G$, denoted by $d(u, v)$, is the length of the shortest path connecting $u$ and $v$. The \textit{diameter} of $G$ is given by diam($G$)=$\max\{d(v,u)\mid u,v \in V(G)\}$. The number of edges connected to a vertex $v$ is the \textit{degree} of $v$.
	
	Let $(x_1,x_2,x_3,\dots,x_m)$ be a burning sequence of a graph $G$ of length $m$. It is an \textit{optimal} burning sequence if $b(G)=m$. The fire from $x_i$ spreads and, in $m$ rounds, burns every vertex located within distance $m-i$ from it in $G$. Observe that if there is a path of order $2(m-i)+1$ that is a subgraph of $G$, then in a burning process of $m$ rounds, placing the $i$-th burning source at the centre of the path would completely burn the path. For any $k \in \mathbb{N}$, the $k$-th \emph{closed neighbourhood} $N_{k}[v]$ of a vertex $v$ is  the set $\{\,u\in V(G)\mid d(u,v)\leq k\,\}$.	By \cite{bonato2016how}, we know that $(x_1,x_2,\dotsc ,x_m)$ constitutes a burning sequence if and only if $N_{m-1}[x_1] \cup N_{m-2}[x_2] \cup\cdots\cup N_{0}[x_m]=V(G)$ and $d(x_i,x_j)\geq j-i$ whenever $i<j$. Hence, we say that $N_{m-i}[x_i]$ is the \emph{neighbourhood associated} to the burning source $x_i$ and together they form the \emph{associated neighbourhoods} for the given sequence.
	
	Since the burning number of a connected graph is given by the smallest burning number attained across its spanning trees, we focus on trees in this work. Throughout this work, $T$ is a tree. A \textit{leaf} of $T$ is a vertex in $T$ with degree one. A \textit{branch vertex} is a vertex with a degree greater than two. The set of branch vertices of $T$ is denoted as $V_{br}(T)$. An \textit{internal path} refers to a path connecting some two branch vertices without another branch vertex in between, while the path going from a branch vertex to a leaf is called an \textit{arm}. Two trees are defined to be \textit{homeomorphic} provided they are the same up to the extension or contraction of the arms or internal paths by deleting or inserting some vertices. A tree $T$ is \textit{homeomorphically irreducible} if there is no other homeomorphic tree with a smaller order; equivalently, $T$ has no vertices of degree two. A tree is called an $n$-spider if it contains exactly one vertex of degree $n \geq 3$, known as the \textit{head}, with all remaining vertices having a degree at most two.
	
	A rooted tree $T$ is a tree in which a branch vertex is designated as the \textit{root}, denoted as $\rt(T)$. The root serves as the beginning point for traversal. The \textit{predecessor} of $v$ in $T$ refers to the unique vertex adjacent to $v$ from the root leading towards $v$.  The empty rooted tree is denoted as $T_{\emptyset}$. For any vertex $v\in V(T)$, the rooted tree $T[v]$ is the subtree of $T$ with root $v$ and all its descendants in $T$.
	
	Now, we present some earlier known general properties regarding the burning sequence and its associated neighbourhoods of extremal  trees with respect to order with a specified burning number. These results will be the foundation of the framework in Section~\ref{T-admissiblesequence}.  
	
	\begin{theorem}\label{novertexbetween}\cite{LeongSimTeh2023spider}
		Suppose $T$ is a tree and $b(T)=m$. Suppose 
		compared to homeomorphic trees having equal burning numbers, $T$ has the largest order. Then for any burning sequence of $T$ that is optimal, the following properties are satisfied
		for any neighbourhood associated to a burning source:
		\begin{enumerate}
			\item  If some branch vertices are contained in the neighbourhood, then one of the branch vertices is the chosen vertex for the burning source.
			\item If at least two branch vertices are contained in the neighbourhood, then no vertex of $T$ of degree two lies strictly in some path that connects a pair of branch vertices within the neighbourhood.			
		\end{enumerate}		   
	\end{theorem}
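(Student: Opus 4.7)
The plan is to prove both parts by contradiction via the maximality of $T$. Assuming a violation of the relevant property for some optimal burning sequence $(x_1, \ldots, x_m)$, I would construct a tree $T^*$ homeomorphic to $T$ with $|V(T^*)| > |V(T)|$ and $b(T^*) \leq m$. Since the burning number is weakly Lipschitz under a single edge subdivision, $T^*$ can, if necessary, be further subdivided to ensure $b(T^*) = m$ while still having $|V(T^*)| > |V(T)|$, contradicting the maximality of $T$ in its homeomorphism class.

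For part (1), suppose $N_i := N_{m-i}[x_i]$ contains a branch vertex $v$ while $x_i \notin V_{br}(T)$. Pick $v$ minimizing $d := d(x_i, v) \geq 1$, so that all internal vertices of the $x_i$-to-$v$ path have degree two. Since $\deg(v) \geq 3$, there is an edge $e$ incident to $v$ pointing in a direction other than toward $x_i$; the candidate $T^*$ is obtained by subdividing $e$ once, inserting a new degree-two vertex $z$. For a burning witness, I would modify the original sequence by replacing $x_i$ with $v$ and, where this breaks the separation condition $d(x_j, v) \geq |i - j|$ for some $j$, locally shift or reorder the affected sources. The key point is that $v$'s ball of radius $m - i$ dominates $x_i$'s on the branches leaving $v$, while $z$ sits at distance one from $v$ and is therefore safely inside $N_{m-i}[v]$.

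For part (2), by part (1) we may assume $x_i \in V_{br}(T)$; let $v_2 \in N_i$ be another branch vertex and $u$ a degree-two vertex strictly interior to the unique $x_i$-to-$v_2$ path, which lies entirely in $N_i$ by the convexity of balls in trees. Here $T^*$ is obtained by suppressing $u$ (contracting one of its two incident edges, preserving the homeomorphism class) and then subdividing two other edges, chosen so that the two new degree-two vertices fall strictly inside some source's associated ball. The net change is $+1$ vertex, and the slack needed to accommodate two subdivisions comes from shortening the already-short $x_i$-to-$v_2$ internal path that was redundantly covered by a single source.

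The main obstacle will be verifying that the modified burning sequence actually covers $T^*$. A subdivision locally extends distances across the new edge on one side, which can displace a boundary vertex out of some source's ball, and relocating a source away from $x_i$ can break the separation condition $d(x_k, x_\ell) \geq \ell - k$ for other indices. Both effects must be absorbed by the slack implicit in the hypothesis---off-center placement of $x_i$ near a branch vertex for part (1), and two branch vertices sharing a source's ball through a degree-two waypoint for part (2). A careful case analysis on the positions of the remaining sources relative to $v$ (resp.~to $x_i$ and $v_2$) and on the arm and internal-path structure around the affected branch vertices will be required, along with verification that $T^*$ remains homeomorphic to $T$ after the local edits.
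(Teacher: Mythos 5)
First, a point of reference: this paper does not actually prove Theorem~\ref{novertexbetween}; it is imported wholesale from \cite{LeongSimTeh2023spider}, so your proposal has to be judged on its own terms rather than against an in-paper argument. Your overall strategy --- an exchange argument producing a strictly larger homeomorphic tree that is still $m$-burnable, then growing it until the burning number is exactly $m$ --- is the right one, and your sketch of Part~(2) (suppress the degree-two vertex $u$, which pulls the entire subtree beyond the far branch vertex one step closer to $x_i$, then spend that slack by subdividing one edge in each of two directions leaving that branch vertex, for a net gain of $+1$) is essentially sound. One simplification: the ``separation condition'' you flag as the main obstacle is not really an obstacle, because of the standard fact that if $\bigcup_{i=1}^{m} N_{m-i}[y_i]=V(G)$ for \emph{any} choice of centres $y_i$, then $b(G)\le m$ (a ball violating $d(y_k,y_\ell)\ge \ell-k$ is contained in an earlier one and its centre can be relocated freely); so only coverage needs to be verified.

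The genuine gap is in Part~(1). Write $d=d(x_i,v)\ge 1$. When you relocate the $i$-th source from $x_i$ to $v$, every vertex $u$ in the component of $T-x_i$ not containing $v$ satisfies $d(u,v)=d(u,x_i)+d$, so the up to $d$ vertices of $N_{m-i}[x_i]$ at distance $m-i-d+1,\dots,m-i$ from $x_i$ on that far side become uncovered, and nothing guarantees that another source's ball contains them. Your construction neither deletes these vertices nor re-covers them, so the modified sequence need not burn $T^{*}$. Moreover, even after deleting them the bookkeeping fails: you gain only the single subdivided vertex $z$ while possibly deleting up to $d$ vertices, a net loss whenever $d\ge 2$. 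The repair is exactly where $\deg(v)\ge 3$ must enter quantitatively: $v$ has at least two incident directions not pointing toward $x_i$, and in each of them the relocated ball reaches $d$ steps further than $N_{m-i}[x_i]$ did, so one can insert $d$ new degree-two vertices at the frontier of the ball in each such direction while contracting the far side of $x_i$ by at most $d$ vertices, for a net gain of at least $2d-d=d\ge 1$. (One must also place insertions in a direction containing another branch vertex of $N_{m-i}[x_i]$ beyond the old frontier, so that vertices covered by other sources are not pushed out of their balls.) As written, your single-subdivision construction for Part~(1) does not produce a larger $m$-burnable homeomorphic tree.
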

	
	Assume $T$ is a tree and $b(T)=m$. Suppose $T$ is no longer $m$-burnable after extending any of its arm or  internal path by inserting a vertex. Then $T$ is said to be \textit{maximally $m$-burnable}. 
	
	\begin{theorem}\label{maximaldisjoint}\cite{LeongSimTeh2023spider}
		Suppose $T$ is a tree such that it is maximally $m$-burnable. Then the associated neighbourhoods are mutually non-overlapping for every burning sequence of $T$ that is optimal.	
	\end{theorem}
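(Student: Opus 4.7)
The plan is to argue by contrapositive. Assume that some optimal burning sequence $(x_1,x_2,\ldots,x_m)$ of $T$ has two associated neighbourhoods $N_{m-i}[x_i]$ and $N_{m-j}[x_j]$ with $i<j$ and $N_{m-i}[x_i]\cap N_{m-j}[x_j]\neq\emptyset$. I will then construct an extension $T'$ of $T$---obtained by inserting a single vertex on some arm or internal path of $T$---that remains $m$-burnable. This contradicts the maximal $m$-burnability of $T$.

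The first step is to localise the overlap. Pick $y\in N_{m-i}[x_i]\cap N_{m-j}[x_j]$, let $P$ be the unique $x_i$--$x_j$ path in $T$, and let $w\in P$ be the vertex of $P$ closest to $y$. Since the paths $y$-to-$x_i$ and $y$-to-$x_j$ both pass through $w$, the vertex $w$ itself lies in both balls, and
\[
d(x_i,x_j)=d(x_i,w)+d(w,x_j)\le (m-i)+(m-j).
\]
Along $P$ the two balls thus carry a joint covering capacity of $(m-i)+(m-j)$, exceeding the length $d(x_i,x_j)$ of $P$ by at least one. This surplus is the room that will absorb a newly inserted vertex.

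The construction of $T'$ then selects an arm or internal path on which to spend the surplus. In the generic case where $P$ lies within a single arm or internal path of $T$ (no branch vertex strictly between $x_i$ and $x_j$), the insertion is placed on $P$ inside the overlap segment and the burning sequence is left unchanged. Coverage is preserved since the surplus absorbs the one-unit lengthening of $P$, and the spacing constraint $d_{T'}(x_p,x_q)\ge|p-q|$ still holds because only $d(x_i,x_j)$ is altered, increasing by one while remaining at least $j-i$. When $P$ crosses a branch vertex $b$ of $T$, I would invoke Theorem~\ref{novertexbetween} to constrain where branch vertices can appear inside $N_{m-i}[x_i]$ or $N_{m-j}[x_j]$, permitting the insertion to be rerouted onto a suitable arm emanating from $b$, combined if necessary with a one-step shift of $x_j$ along $P$ toward $x_i$ to realign the coverage.

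The main obstacle is the verification that the (possibly modified) burning sequence still covers every vertex of $T'$ and still satisfies $d_{T'}(x_p,x_q)\ge|p-q|$ for every pair. Since inserting one vertex on $P$ alters a distance in $T'$ exactly when the path between the two endpoints uses the subdivided edge, this reduces to a case analysis based on which side of the insertion each vertex and each burning source lies on. The delicate point is matching the coverage loss incurred by distances that grow by one across the insertion against the surplus gained from the overlap; Theorem~\ref{novertexbetween} is precisely the tool that limits the configurations requiring individual checking, and once these are handled, $T'$ is $m$-burnable, contradicting maximality.
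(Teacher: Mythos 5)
This theorem is imported from \cite{LeongSimTeh2023spider}, so the present paper contains no proof to compare against; judged on its own terms, your plan has the right skeleton (contrapositive, insert one vertex, exhibit a burning sequence of the extension), but two of its load-bearing steps do not hold up. First, you invoke Theorem~\ref{novertexbetween} to control branch vertices inside the neighbourhoods, but that theorem assumes $T$ has the largest order among homeomorphic trees with the same burning number. In the contrapositive of Theorem~\ref{maximaldisjoint} you only have a tree with \emph{some} optimal burning sequence whose neighbourhoods overlap; maximal $m$-burnability (failure of which you are trying to establish) is a statement about single-vertex extensions, not about order-maximality in the homeomorphism class, so the hypothesis of Theorem~\ref{novertexbetween} is simply not available and that branch of your argument has no support.

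Second, the ``surplus'' you compute only accounts for the interaction between $x_i$ and $x_j$ along the path $P$, while the actual obstruction to coverage after subdividing an edge $e$ of $P$ is any vertex $w$ lying on one side of $e$ all of whose covering sources lie on the other side and cover it with zero slack --- and these sources need not be $x_i$ or $x_j$. Nothing in your setup rules out a third source $x_k$ whose ball reaches across the chosen edge exactly to distance $m-k$; in that situation the unchanged sequence fails on $T'$, and the one-unit surplus between $x_i$ and $x_j$ does nothing to repair it. Note also that the overlap of $N_{m-i}[x_i]$ and $N_{m-j}[x_j]$ on $P$ can be a single vertex (when $d(x_i,x_j)=(m-i)+(m-j)$), in which case there is no edge ``inside the overlap segment'' at all and your generic case is vacuous. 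What is missing is a principled choice of which overlapping pair and which edge to subdivide (for instance, an extremal choice that forces every source reaching across the chosen edge to do so with positive slack, or a reassignment of later burning sources), and this choice is the entire content of the proof rather than a routine case check. As written, the proposal identifies where the difficulty lies but does not resolve it.
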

	
	Now, we prove the converse of Theorem~\ref{maximaldisjoint} by adding an additional condition.
	
	\begin{theorem}\label{maximalconverse}
		Suppose $T$ is a tree with burning number $m$. Then $T$ is maximally $m$-burnable if the following conditions hold:
		\begin{enumerate}
			\item The associated neighbourhoods  are mutually non-overlapping for every burning sequence of $T$ that is optimal.
			\item All leaves of $T$ are burned in the last round. 			
		\end{enumerate}		
	\end{theorem}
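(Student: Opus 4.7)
The plan is to argue by contradiction. Suppose $T$ is not maximally $m$-burnable; then some edge $e=uw$ on an arm or internal path can be subdivided (inserting a new vertex $v$) to produce a tree $T'$ that is still $m$-burnable. Fix an optimal burning sequence $\sigma'=(y_1,\dotsc,y_m)$ of $T'$ and project it to a sequence $\sigma=(x_1,\dotsc,x_m)$ on $T$ by contracting $v$: set $x_i=y_i$ whenever $y_i\neq v$, and $x_i\in\{u,w\}$ otherwise. Because contracting the edge $uv$ (or $vw$) of $T'$ can only decrease pairwise distances, the closed neighbourhoods $N_{m-i}[x_i]$ computed in $T$ still cover $V(T)$, and, combined with $b(T)=m$, force $\sigma$ to be an optimal burning sequence of $T$ (after, if necessary, a harmless rearrangement that preserves the cover and restores the distance condition). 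Hypotheses (1) and (2) then apply to $\sigma$: the balls $B_i:=N_{m-i}[x_i]$ partition $V(T)$, and each leaf of $T$ lies exactly on the boundary of its containing ball.

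The remainder is a case analysis on where the endpoints $u$ and $w$ of $e$ fall within this partition. If $u\in B_i$ and $w\in B_j$ with $i\neq j$, then adjacency of $u,w$ together with disjointness of the $B_l$'s forces both endpoints to be boundary vertices of their respective balls, while $x_i$ and $x_j$ must lie on opposite components of $T\setminus e$; a short connectedness argument then shows no ball crosses $e$, so the same collection of sources viewed in $T'$ still covers exactly $V(T)$ and misses $v$. This contradicts the fact that $\sigma'$ is a burning sequence of $T'$. Otherwise $u,w\in B_i$; write $d=d_T(x_i,u)$ with $x_i$ on (say) the $u$-side of $e$, and set $r:=m-i-d-1\geq 0$. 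When $r=0$, the boundary vertex $w$ is pushed out of $B_i$ after subdivision (its distance to $x_i$ grows to $d+2>m-i$), and disjointness precludes any other ball from recovering it. When $r\geq 1$, the vertex $w$ is strictly interior to $B_i$, so by (2) cannot be a leaf; examining the subtree $S_w$ of $T\setminus e$ containing $w$, either $S_w$ has a vertex at distance exactly $r$ from $w$ -- which drops out of $B_i$ in $T'$ and is unrecoverable -- or $S_w$ has depth less than $r$ from $w$, forcing some leaf of $T$ in $S_w$ (at least one such leaf exists because $w$ is not a leaf) to sit strictly inside $B_i$ and violate (2). Every sub-case yields a contradiction.

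The most delicate point I expect to encounter is the sub-case in which $v$ itself is a burning source of $\sigma'$: the contraction then identifies two distinct vertices, the projected $\sigma$ may fail the distance condition in $T$, and the ball associated with $v$ in $T'$ is no longer literally a ball of the projected source in $T$. I plan to address this by choosing $x_{i_0}\in\{u,w\}$ according to the positions of the other sources, and by arguing that the cover of $V(T)$ produced by $\sigma$ still suffices to let hypotheses (1) and (2) be invoked on an appropriate optimal burning sequence of $T$ with the same associated neighbourhoods. Checking the exhaustiveness of the partition cases above and handling this projection issue cleanly is the main bookkeeping task in the formal write-up.
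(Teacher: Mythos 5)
Your overall strategy coincides with the paper's: pass to the homeomorphic tree $T'$ with one extra vertex $v$ and $b(T')=m$, project an optimal burning sequence $\sigma'$ of $T'$ down to $T$, and play the projected neighbourhoods off against hypotheses (1) and (2). Your case analysis (whether $u$ and $w$ land in the same ball or different balls, and the sub-cases on $r$) is sound and in fact finer than the paper's, which simply splits on whether $v$ sits on an arm or on an internal path and observes that condition (2), respectively (1), then fails for the projected sequence.

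The step I would not accept as written is the parenthetical ``after, if necessary, a harmless rearrangement that preserves the cover and restores the distance condition.'' Contracting the subdivided edge can drop $d(y_i,y_j)$ below $j-i$, and the standard repair of a covering family into a legal burning sequence changes the sources and hence the associated neighbourhoods. If that repair is invoked, two things break at once: hypotheses (1) and (2) then apply to the repaired sequence's neighbourhoods rather than to your balls $B_i$, and every contradiction you derive has the form ``the sources $x_i$ fail to cover some vertex of $T'$,'' which contradicts nothing unless the $x_i$ are literally the sources of $\sigma'$. So the rearrangement is not harmless; it severs both links your argument depends on, and the same objection applies to your flagged source-at-$v$ sub-case. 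The repair is the paper's opening move: first dispose of the case where $\sigma'$ itself has overlapping associated neighbourhoods or a leaf of $T'$ not burned last, since the projection directly inherits that violation and one of (1), (2) fails for $T$ immediately. In the remaining case, pairwise disjointness of the balls in $T'$ forces $d_{T'}(y_i,y_j)>(m-i)+(m-j)\ge j-i$, which survives the loss of one unit of distance under contraction, so the projected sequence is already a legal optimal burning sequence of $T$ with the sources unchanged (only the source-at-$v$ sub-case still needs your careful choice of $u$ versus $w$), and your case analysis then goes through.
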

	\begin{proof}
		We argue by contraposition. Assume $T$ is not maximally $m$-burnable. By definition, there exists a $T'$ homeomorphic to $T$ with $V(T')=V(T)+1$ and $b(T')=m$. Consider any burning sequence for $T'$. If there exists some leaf that does not burn last or the associated neighbourhoods are not mutually non-overlapping, then we see that it induces a burning sequence of $T$ with the same property, and we are done. Therefore, we may assume that the associated neighbourhoods  are mutually non-overlapping for every burning sequence of $T'$ that is optimal, and all leaves of $T'$ are burned in the last round.
		
		Suppose $T'$ is obtained from $T$ by adding a vertex at some arm. Therefore, we found a burning sequence of $T$ where some leaves of $T$ are not burned in the last round. Suppose $T'$ is obtained from $T$ by adding a vertex at some internal path. This implies there exists a vertex in $T$ burned by two of the associated neighbourhoods. In other words, the associated neighbourhoods are not mutually non-overlapping.
	\end{proof}
	
	The following corollary is the direct consequence of Theorem~\ref{maximaldisjoint} and Theorem~\ref{maximalconverse}.
	
	\begin{corollary}\label{bothsidemaximaldisjoint}
		Suppose $T$ is a tree and $b(T)=m$. Then $T$ is maximally $m$-burnable if and only if for every optimal burning sequence of $T$, all leaves of $T$ are burned in the last round and the associated neighbourhoods are mutually non-overlapping.
	\end{corollary}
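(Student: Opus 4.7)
The plan is to split the biconditional into its two implications, each of which essentially reduces to one of the two preceding theorems, with only a small extension argument needed to cover the one ingredient not explicitly recorded there.

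For the backward implication, I would assume that for every optimal burning sequence of $T$ the associated neighbourhoods are mutually non-overlapping and every leaf of $T$ is burned in the last round. These are precisely the hypotheses of Theorem~\ref{maximalconverse}, so $T$ is maximally $m$-burnable. No further work is required here.

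For the forward implication, I would assume $T$ is maximally $m$-burnable and fix an arbitrary optimal burning sequence $(x_1,\dots,x_m)$ of $T$. The non-overlapping of the associated neighbourhoods is exactly the conclusion of Theorem~\ref{maximaldisjoint}, so only the claim that every leaf is burned in round $m$ remains. I plan to establish this by contraposition with a one-vertex extension trick: if some leaf $v$ is burned in round $j<m$, then there is a source $x_i$ with $d(x_i,v)=j-i\leq m-i-1$. Attach a new pendant vertex $v'$ to $v$, obtaining a tree $T'$ homeomorphic to $T$ with one extra vertex on the arm containing $v$. Since $d(x_i,v')=j-i+1\leq m-i$, the same sequence $(x_1,\dots,x_m)$ still covers $V(T')=V(T)\cup\{v'\}$, so $T'$ is $m$-burnable. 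This contradicts the maximal $m$-burnability of $T$.

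The only potential obstacle is checking that the arm extension leaves the rest of the burning sequence intact; but this is immediate because no burning source is relocated and each other associated neighbourhood already lies inside $V(T)\subset V(T')$. Hence the forward direction combines Theorem~\ref{maximaldisjoint} with this short arm-extension argument, while the backward direction is an immediate restatement of Theorem~\ref{maximalconverse}, which matches the author's description of the corollary as a direct consequence of the two cited theorems.
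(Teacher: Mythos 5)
Your proof is correct and follows the same route the paper intends: the backward implication is exactly Theorem~\ref{maximalconverse}, and the forward implication combines Theorem~\ref{maximaldisjoint} with the observation that a leaf burned before round $m$ would allow a pendant extension of that arm, contradicting maximal $m$-burnability. The paper states the corollary as a ``direct consequence'' of the two theorems without writing this out; your explicit arm-extension argument (checking $d(x_i,v')\leq m-i$ and that distances between the sources are unchanged) correctly supplies the one piece of the forward direction that is not literally contained in either theorem's statement.
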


	Path forests are closely related to spider graphs when it pertains to graph burning. In this context, a path forest can be easily obtained from a spider graph by removing the neighbourhood associated with the burning source that  take care of the spider's head in the burning process. The following definition is useful in Section \ref{smallestdiameter} and the existence of $L_n$ was proved in \cite{tanteh2022doublespider}.
	
	\begin{definition}\label{atleastLn}\cite{tanteh2022doublespider}
		Let $n\geq 2$. The number $L_n$ is defined to be the smallest integer satisfying the property: for any path forest $T$ with $n$ independent paths, if the order of $T$ is $m^2$ for some integer $m$ and its shortest path has order at least $L_n$, then its burning number equals $m$.
	\end{definition}	
	
	The following lemma extends Theorem~\ref{maximaldisjoint} with a further property that is useful in Section \ref{T-admissiblesequence}.
	
	\begin{lemma}\label{080623a}
		Let $m>n\geq 1$ and let $T$ be a tree with $n$ branch vertices and burning number $m$. Suppose out of trees homeomorphic to $T$ and having burning number $m$,	$T$ possesses the biggest order among them. Then for every optimal burning sequence $(x_1,x_2,\dotsc, x_m)$ of $T$, 
		there exists $1\leq l\leq  n$ such that 
		$x_1, x_2, \dotsc, x_l$ are all branch vertices and 
		$V_{br}(T) \subseteq \bigcup_{i=1}^l N_{m-i}[x_i]$.
	\end{lemma}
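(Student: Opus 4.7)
The approach is to extract the structural content already implicit in Theorem~\ref{novertexbetween} and Theorem~\ref{maximaldisjoint}, and then close the remaining gap—the fact that branch-vertex sources form an initial segment and jointly cover all branch vertices—by a swap-and-reshape contradiction against the extremality of $T$.

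I would begin by observing that, since $T$ has the largest order among trees homeomorphic to $T$ with burning number $m$, it is in particular maximally $m$-burnable, so Theorem~\ref{maximaldisjoint} guarantees that the associated neighborhoods of any optimal burning sequence $(x_1,x_2,\dotsc,x_m)$ of $T$ are pairwise disjoint. Writing $S = \{\,i : x_i \in V_{br}(T)\,\}$, each branch vertex lies in exactly one $N_{m-i}[x_i]$, and Theorem~\ref{novertexbetween}(1) forces such an $x_i$ to itself be a branch vertex. Hence $V_{br}(T)\subseteq \bigcup_{i\in S}N_{m-i}[x_i]$, and the disjointness makes $i\mapsto x_i$ injective on $S$, so $|S|\leq n$.

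Let $l$ be maximal with $\{1,\dotsc,l\}\subseteq S$. To finish, it suffices to show that $l\geq 1$ and $V_{br}(T)\subseteq \bigcup_{k=1}^{l}N_{m-k}[x_k]$. Arguing by contradiction, suppose some branch vertex is not in this union; then some $i\in S$ has $i>l$, and with $j:=l+1$ we have $j<i$, $x_j\notin V_{br}(T)$, $x_i\in V_{br}(T)$, and I would choose $i$ minimal with these properties (and handle $l=0$, i.e.\ $x_1\notin V_{br}(T)$, by the same device). The goal is to build a tree $T'$ homeomorphic to $T$ with $|V(T')|>|V(T)|$ and $b(T')\leq m$, contradicting the extremality of $T$. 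To this end, I would modify $T$ by inserting $i-j$ degree-two vertices into each of the $d:=\deg(x_i)\geq 3$ arms or internal paths incident to $x_i$ (placed inside $N_{m-i}^{T}[x_i]$), and deleting the at most $2(i-j)$ vertices of $N_{m-j}^{T}[x_j]\setminus N_{m-i}^{T}[x_j]$ from the arm or internal path containing $x_j$. The net change in vertex count is at least $(d-2)(i-j)\geq 1$, so $|V(T')|>|V(T)|$. One then verifies that the sequence obtained from $(x_1,\dotsc,x_m)$ by interchanging the entries at positions $i$ and $j$ is a burning sequence of $T'$: coverage is ensured by construction (the enlarged ball at $x_i$ absorbs both the old ball and the insertions, the shrunk ball at $x_j$ matches what remains of $x_j$'s arm/internal path, and every other ball retains its original territory because insertions only lengthen some distances and the deletions open no new gap).

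The principal obstacle is the distance verification for the swapped sequence in $T'$. In the worst case—source pairs $(x_k,x_{k'})$ whose connecting path in $T$ crosses the deleted segment near $x_j$—one must confirm that the reduction of up to $2(i-j)$ in the distance does not violate $d_{T'}(y_k,y_{k'})\geq k'-k$. Here I would rely on the sharp lower bound $d_T(x_k,x_{k'})\geq (m-k)+(m-k')+1$ supplied by the disjointness of the associated neighborhoods in $T$; the resulting slack reduces the check to $2(i-j)\leq 2m-2\max(k,k')+1$, which is delicate when $\max(k,k')$ is close to $m$. The hypothesis $m>n$, the minimality of the chosen $i-j$, and a short case analysis based on whether $x_j$ lies on an arm or on an internal path, together with the placement of the last few sources relative to the deletion, are the ingredients that should make this verification go through and yield the desired contradiction.
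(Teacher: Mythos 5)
Your overall strategy is the same as the paper's: use extremality to get maximal $m$-burnability, invoke Theorem~\ref{maximaldisjoint} (or Corollary~\ref{bothsidemaximaldisjoint}) for disjointness of the associated neighbourhoods and Theorem~\ref{novertexbetween} to force every source whose neighbourhood meets $V_{br}(T)$ to be a branch vertex, and then rule out a non-branch source preceding a branch source by a swap-and-reshape argument against extremality. The first half of your write-up (the set $S$, the bound $\vert S\vert\le n$, the coverage of $V_{br}(T)$) is correct and matches the paper.

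The gap is in the second half, at exactly the step you yourself call ``delicate,'' and it is a real gap, not just an omitted routine check. Two issues. First, your choice of the pair to swap makes the problem harder than necessary: with $j=l+1$ and $i$ the least element of $S$ exceeding $l$, the gap $i-j$ can be large, so the ball around $x_j$ must shrink by radius $i-j$ and some distances drop by up to $2(i-j)$. The paper instead observes that the negation of the conclusion always yields \emph{adjacent} indices $i$ and $i+1$ with $N_{m-i}[x_i]$ containing no branch vertex and $x_{i+1}$ a branch vertex, and swaps only those: the radius change is $1$, exactly two vertices are deleted from the path $N_{m-i}[x_i]$, and $\deg(x_{i+1})\ge 3$ insertions give the net gain. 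In your notation, taking $j=i-1$ (which is not in $S$ by minimality of $i$) would hand you this simplification. Second, and more seriously, the verification you defer does not close with the tool you name: the bound $d_T(x_k,x_{k'})\ge (m-k)+(m-k')+1$ reduces the check to $m-\max(k,k')\ge i-j$, which genuinely fails when $\max(k,k')$ is close to $m$, and neither $m>n$ nor the minimality of $i$ excludes that case. What actually rescues the argument is a stronger fact you did not invoke: if the $x_k$--$x_{k'}$ path traverses the entire deleted region, it traverses the whole path $N_{m-j}[x_j]$ of order $2(m-j)+1$, which is disjoint from both $N_{m-k}[x_k]$ and $N_{m-k'}[x_{k'}]$, so $d_T(x_k,x_{k'})\ge (m-k+1)+2(m-j)+(m-k'+1)$; this absorbs the loss of $2(i-j)$ with room to spare. (A similar disjointness argument, which you only gesture at, is needed to confirm that the insertions adjacent to $x_i$ do not destroy the coverage of any other neighbourhood.) Without either the adjacent-swap reduction or this sharper distance estimate, the proof as written is incomplete.
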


	\begin{proof}
		It suffices to show that for every optimal burning sequence $(x_1,x_2,\dotsc, x_m)$ of $T$, there exists $1\leq l\leq n$ such that 
		$N_{m-i}[x_i]\cap V_{br}(T) \neq \emptyset$ for all $1\leq i\leq l$ and $N_{m-i}[x_i]\cap V_{br}(T) = \emptyset$ for all $l+1\leq i\leq m$ because $x_i$ must then  be a branch vertex for all $1\leq i \leq l$ by Theorem~\ref{novertexbetween}. We proceed by contradiction. Suppose $(x_1,x_2,\dotsc, x_m)$ is some optimal burning sequence satisfying $N_{m-i}[x_i]\cap V_{br}(T) = \emptyset$ and $N_{m-i-1}[x_{i+1}]\cap V_{br}(T) \neq \emptyset$ for some $1\leq i<m$. By Theorem~\ref{novertexbetween}, $x_{i+1}$ must be a branch vertex. By Corollary~\ref{bothsidemaximaldisjoint}, the subtree induced by $N_{m-i}[x_i]$ must be a path of order $2(m-i)+1$. 
		Let $T'$ be a new tree attained from $T$ by contracting the arm containing $x_i$ through the removal of  two vertices belonging to $N_{m-i}[x_i]$ but not $x_i$ and by adding a vertex to each arm or internal path joined to $x_{i+1}$. It can be observed  that $\vert T'\vert \geq \vert T\vert +1$. Furthermore, it can be verified that $(x_1, x_2, \dotsc, x_{i-1}, x_{i+1}, x_i, x_{i+2}, \dotsc, x_m)$ is a burning sequence of the new tree. However, $T'$ is homeomorphic to $T$ and has an order larger than $T$. Hence, this contradicts our hypothesis about $T$.
	\end{proof}

	\section{T-Admissible Sequences and Their Induced Extremal Trees}\label{T-admissiblesequence}

	\begin{definition} \label{subtree}
		Suppose $T$ is a homeomorphically irreducible tree. A finite sequence \mbox{$\langle T_1,T_2,T_3,\dots , T_l \rangle$} of  (possibly empty)   rooted subtrees    of $T$  is said to be a \emph{$T$-admissible sequence} if the collection \mbox{$\{V(T_i) | 1\leq i\leq l \}$} forms a partition of $V_{br}(T)$.
	\end{definition}
	
	\begin{definition}
		Let $T$ be a homeomorphically irreducible tree and suppose \mbox{$t=\langle T_1,T_2,T_3,\dots , T_l \rangle$} is a $T$-admissible sequence. The \textit{signature of $t$} is the function $\sig_{t} : V_{br}(T) \mapsto \mathbb{Z^+}$ defined by: for all $v \in V_{br}(T)$,  
		\begin{equation*}
			\sig_{t}(v)=\dist(v,\rt(T_i))+i,
		\end{equation*}
		where $i$ is the unique integer such that $v \in V(T_i)$. 
	\end{definition}
	
	\begin{definition}\label{inducedatree}
		Let $T$ be a homeomorphically irreducible tree and suppose $t = \langle T_1,T_2,T_3,\dots , T_l \rangle$ is a $T$-admissible sequence. Let $m$ be an integer such that $m>\max\{\sig_t(v)|v\in V_{br}(T) \}$. (Note that $m$ need not be greater than $l$.) A tree $T''$ is said to be \textit{induced by $t$ of degree $m$} if $T''$ is a tree obtained from $T$ by extending the internal paths and arms of $T$ according to the following rules in  two stages.
		\\
		
		Stage 1: Obtain the unique tree $T'$ from $T$ as follows.
		\begin{enumerate}
			\item Extend every arm of $T$ that connects to a branch vertex, say $v$, by $m-\sig_t(v)-1$ vertices.
			\item For any two adjacent branch vertices $v$ and $v'$ of $T$ such that $v\in V(T_i)$ and $v'\in V(T_j)$ for some $i \neq j$, extend the internal path joining $v$ and $v'$ by   $2m-\sig_t(v)-\sig_t(v')$ vertices. 
		\end{enumerate}
		Stage 2: Obtain $T''$ (not unique) from $T'$ as follows.
		\begin{enumerate}
			\item For any $1\leq i\leq m$ such that $T_i=T_{\emptyset}$ or $i\geq l+1$, extend any internal path of $T'$ not of length one or any arm of $T'$ by $2(m-i)+1$ vertices.
			\item The extensions in Stage 2 can be repeatedly applied on the same arm or internal path.
		\end{enumerate}
	\end{definition}
	
	\begin{remark}\label{2104d}
		If $t$ is a  $T$-admissible sequence, then the order of any tree induced by $t$ of degree $m$ is uniquely determined by $t$ and $m$. 
	\end{remark}
	
	\begin{definition} \label{2104b}
		Let $T$ be a homeomorphically irreducible tree. Suppose $s=\langle S_1,S_2,S_3,\dotsc,S_l \rangle$  and	$t=\langle T_1,T_2,T_3,\dots , T_l \rangle$ are $T$-admissible sequences of equal length $l$. We say that $s$ is a \emph{reduction of $t$} if for some integers $1\leq i<j\leq l$, $w\in V(T_i)$, and $v\in V(T_j)$ such that $w$ adjacent to $v$, the following holds:
		\begin{itemize}
			\item $S_k=T_k$ whenever $k\notin \{i,j\}$; 
			\item $\rt(S_i)=\rt(T_i)$;
			\item $\rt(S_j)=\rt(T_j)$ if $v$ is not $\rt(T_j)$;
			\item $V(S_i)=V(T_i)\cup V(T_j[v])$;
			\item $V(S_j)=V(T_j)\backslash V(T_j[v])$;
			\item $\sig_s(v)=\sig_{t}(v)$.
		\end{itemize}
	\end{definition}
	
	\begin{remark}\label{2104c}
		Note that $S_i$ is obtained by ``combining" $T_i$ and $T_j[v]$ by joining $w$ to $v$ with an edge.
	\end{remark}
	
	\begin{lemma}\label{samesigandorder}
		Let $T$ be a homeomorphically irreducible tree. Suppose $s$ and $t$ are $T$-admissible sequences such that $s$ is a reduction of $t$. Then $\sig_s=\sig_{t}$ and the order of any tree induced by $s$ of degree $m$ is equal to that of any tree induced by $t$ of the same degree.
	\end{lemma}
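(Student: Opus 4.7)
The plan is to establish the two claims separately, with both resting on a single identity extracted from the reduction hypothesis. From $\sig_s(v)=\sig_t(v)$ together with $\rt(S_i)=\rt(T_i)$, I would first observe that the unique path in $S_i$ from $v$ to $\rt(T_i)$ passes through $w$, so $\dist(v,\rt(S_i))=\dist(w,\rt(T_i))+1$, and unfolding the definition of the signature yields $\sig_t(w)=\sig_t(v)-1$. This is the single fact that drives everything else; in particular, it forces a compatibility between the signature at the attachment vertex $w$ and at the predecessor of $v$ in $T_j$.

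For the equality $\sig_s=\sig_t$, I would treat vertices according to which block of the partition they belong to. If $u\in V(T_k)$ with $k\notin\{i,j\}$, then $S_k=T_k$ and the claim is immediate. If $u\in V(T_i)$, then $u\in V(S_i)$ with $\rt(S_i)=\rt(T_i)$, and since $S_i$ is obtained from $T_i$ by attaching $T_j[v]$ through the single edge $wv$, the unique path from $u$ to $\rt(T_i)$ in $S_i$ lies entirely in $T_i$ and the relevant distance is unchanged. For $u\in V(T_j)\setminus V(T_j[v])$ (only relevant when $v\neq \rt(T_j)$), the path from $u$ to $\rt(T_j)$ in $T_j$ avoids $v$ and hence survives in $S_j$. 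Finally, for $u\in V(T_j[v])$ the unique $u$-to-$\rt(T_i)$ path in $S_i$ decomposes as the $u$-to-$v$ path inside $T_j[v]$, the edge $vw$, and the $w$-to-$\rt(T_i)$ path inside $T_i$; a short computation using the identity $\sig_t(w)=\sig_t(v)-1$ then recovers $\sig_s(u)=\sig_t(u)$.

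For the order equality, Remark~\ref{2104d} lets me evaluate using any induced tree, so I would decompose the count into $|V(T)|$, the Stage~1 extensions, and the Stage~2 extensions. Arm extensions in Stage~1 depend only on the signature values at branch vertices and on the combinatorics of $T$ itself, so by the first part they are preserved. For internal-path extensions, the key tree-theoretic observation is that in $T$ the only edge between $V(T_i)$ and $V(T_j[v])$ is $wv$, and, when $v\neq \rt(T_j)$, the only edge between $V(T_j[v])$ and $V(T_j)\setminus V(T_j[v])$ is $pv$, where $p$ is the predecessor of $v$ in $T_j$; consequently, at most these two edges change their inter-part status under the reduction. When $v\neq \rt(T_j)$, Stage~2 is unaffected and the net Stage~1 change equals $\sig_t(w)-\sig_t(p)$, which vanishes because $\sig_t(p)=\sig_t(v)-1$ agrees with $\sig_t(w)$. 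When $v=\rt(T_j)$, the edge $pv$ is absent, but now $S_j=T_{\emptyset}$ creates an additional Stage~2 contribution of $2(m-j)+1$; using $\sig_t(v)=j$ together with $\sig_t(w)=j-1$, a direct calculation shows the Stage~1 loss $2m-\sig_t(w)-\sig_t(v)$ equals exactly $2(m-j)+1$, so the two changes cancel.

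The main obstacle is organising the bookkeeping across the two possibilities $v=\rt(T_j)$ and $v\neq \rt(T_j)$ so that they collapse into a single statement. The case $v=\rt(T_j)$ appears unbalanced if one tracks only Stage~1 contributions, and the crucial point is that when $T_j$ is entirely absorbed into $S_i$ (so that $S_j=T_{\emptyset}$), a Stage~2 term of precisely the right magnitude automatically appears to compensate for the lost inter-part edge; once the identity $\sig_t(w)=\sig_t(v)-1$ is in hand, both cases reduce to the same cancellation.
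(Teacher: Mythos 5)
Your proposal is correct and follows essentially the same route as the paper's proof: the same case split for showing $\sig_s=\sig_t$, the same key identity $\sig_t(w)=\sig_t(v)-1$, and the same Stage~1/Stage~2 bookkeeping in which the only edges changing inter-part status are $wv$ and (when $v\neq\rt(T_j)$) the edge from $v$ to its predecessor in $T_j$, with the case $v=\rt(T_j)$ balanced by the extra $2(m-j)+1$ Stage~2 vertices arising from $S_j=T_{\emptyset}$. No substantive differences to report.
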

	\begin{proof}
		Suppose $t=\langle T_1,T_2,T_3,\dots , T_l \rangle$ and $s=\langle S_1,S_2,S_3,\dots,S_l\rangle$ are two $T$-admissible sequences such that $s$ is a reduction of $t$. The properties stated in Definition~\ref{2104b} hold for some $1\leq i<j \leq l$, $w\in V(T_i)$, and $v\in V(T_j)$. First, we claim that $sig_t=sig_{s}$. Suppose $z\in V_{br}(T)$. If $z\in V(S_k)$ such that $S_k=T_k$ where $k\notin\{i,j\}$, then $\sig_t(z)=\sig_s(z)$. Thus, we consider only three cases. First, consider the case $z\in V(S_i)$ and $z\notin V(T_i)$. This implies that $z\in V(T_j[v])$. Thus, $\sig_s(z) =\sig_s(v) +\dist(v,z)=\sig_t(v) +\dist(v,z)=\sig_t(z)$. Secondly, when $z\in V(S_i)$ and $z\in V(T_i)$, $\sig_t(z)  = i  + \dist(\rt(T_i),z)= i + \dist(\rt(S_i),z)=\sig_s(z)$ because $\rt(T_i)=\rt(S_i)$. Lastly, consider the case $z\in V(S_j)=V(T_j)\backslash V(T_j[v])$. Since $\rt(S_j)=\rt(T_j)$, similarly, we can conclude that $\sig_s(z)=\sig_t(z)$.
		
		Next, we prove the second part of the lemma. We have shown that the signature of $s$ and $t$ are equal. First, we deal with the case where $v$ is not the root of $T_j$. In this case,  let $w_t$ be the predecessor of $v$ in $T_j$ and let $w_s$ be the predecessor of $v$ in $S_i$. (In fact, $w_s=w$.) Note that $\sig_s(w_t)=\sig_s(w_s)=\sig_s(v)-1=\sig_t(v)-1=\sig_t(w_t)=\sig_t(w_s)$. Since $S_i$, $S_j$, $T_i$, and $T_j$ are all non-empty in this case, by Remark~\ref{2104d}, it suffices to show that the unique tree $S'$ obtained from $T$ after Stage 1 according to $s$ has the same order as the unique tree $T'$ obtained from $T$ after Stage 1 according to $t$.
		
		Since $w_s$ and $v$ are adjacent, $w_s\in V(T_i)$ and $v\in V(T_j)$, it follows that to obtain $T'$, the internal path joining $w_s$ and $v$ is extended by $2m- \sig_t(w_s)-\sig_t(v)$ vertices. On the other hand, since $w_s, v \in V(S_i)$, we did not external the internal path joining $w_s$ and $v$ in obtaining $S'$. Similarly, to obtain $S'$, the internal path joining $v$ and $w_t$ is extended by $2m- \sig_s(v)-\sig_s(w_t)$ vertices while we did not extend the internal path joining $v$ and $w_t$ in obtaining $T'$. Since $\sig_s(w_t)=\sig_t(w_s)$, we know that 
		$2m- \sig_t(w_s)-\sig_t(v) = 2m- \sig_s(v)-\sig_s(w_t)$. 
		Meanwhile, for any other pair of adjacent branch vertices of $T$, it can be verified that 
		both vertices belong to the same $S_k$ if and only if both vertices belong to the same $T_k$. Since $\sig_s=\sig_{t}$, it follows that if any of the remaining arm or internal path of $T$ is extended by the defined number of vertices to obtain $S'$, then it is extended by the same number of vertices to obtain $T'$. Therefore, the orders of $T'$ and $S'$ are equal.
		
		Next, we consider the case where $v$ is the root of $T_j$. Again, let $w_s$ be the predecessor of $v$ in $S_i$. In the case of the sequence $t$, the internal path joining $w_s$ and $v$ is extended by $2m-\sig_t(w_s)-\sig_t(v)$ vertices to obtain $T'$. On the other hand, there is no vertex added to the internal path between $w_s$ and $v$ in $S'$. Thus, the unique trees $S'$ and $T'$ obtained from $T$ induced by both $s$ and $t$, respectively, in Stage 1 have a difference of $2m-\sig_t(w_s)-j$ vertices.
		
		The number of vertices added in getting $S''$ by $s$ is $2(m-j)+1$ vertices more than $t$ in Stage 2 because $S_j=T_\emptyset$ as $v$ is the root of $T_j$. Recall that $w_s$ is the predecessor of $v$ in $S_i$ and thus $\sig_t(w_s)=\sig_s(w_s)=\sig_s(v)-1=\sig_t(v)-1=j-1$. Therefore, the order of $T''$ and $S''$ are equal since $$[2m-\sig_t(w_s)-j]-[2(m-j)+1]=[2m-(j-1)-j]-[2(m-j)+1]=0.$$
		
		This proof is complete. Hence, we conclude that both sequences $t$ and $s$ induce trees of degree $m$ of the same order if $s$ is a reduction of $t$.
	\end{proof}
	
	\begin{definition}\label{canonicaldefinition}
		Let $T$ be a homeomorphically irreducible tree and suppose $t = \langle T_1,T_2,T_3,\dots , T_l \rangle$ is a $T$-admissible sequence.  The sequence $t$ is said to be \textit{canonical} if for all $1\leq i<j\leq l$, no vertex $w\in V(T_i)$ is adjacent to a vertex $v\in V(T_j)$ such that $\sig_t(v)=\sig_t(w)+1$. 
	\end{definition}
	
	\begin{lemma} \label{reductiontocanonical}
		Suppose $T$ is a homeomorphically irreducible tree and $t$ is a $T$-admissible sequence. Then $t$ has no reduction if and only if $t$ is canonical.
	\end{lemma}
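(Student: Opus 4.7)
The plan is to reduce the equivalence to a single signature computation. Given any candidate reduction move across an edge between $w \in V(T_i)$ and $v \in V(T_j)$ with $i<j$, I will show that the new signature necessarily satisfies $\sig_s(v) = \sig_t(w) + 1$. The reduction requirement $\sig_s(v) = \sig_t(v)$ imposed in Definition~\ref{2104b} then collapses to $\sig_t(v) = \sig_t(w) + 1$, which is precisely the configuration forbidden by canonicality.

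For the forward direction, I assume $t$ is canonical and suppose toward contradiction that some reduction $s$ of $t$ exists, witnessed by indices $i<j$ and adjacent vertices $w\in V(T_i)$, $v\in V(T_j)$. Since $S_i$ is formed by joining $T_j[v]$ to $T_i$ via the edge $wv$ and is rooted at $\rt(S_i)=\rt(T_i)$, the unique path from $v$ to $\rt(S_i)$ in $S_i$ passes through $w$, giving
\begin{equation*}
\sig_s(v) = i + \dist(v,\rt(S_i)) = i + 1 + \dist(w,\rt(T_i)) = \sig_t(w) + 1.
\end{equation*}
Combining with the condition $\sig_s(v)=\sig_t(v)$ from Definition~\ref{2104b} yields $\sig_t(v)=\sig_t(w)+1$, contradicting canonicality.

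For the reverse direction, I assume $t$ is not canonical and fix $i<j$, $w\in V(T_i)$, $v\in V(T_j)$ adjacent in $T$ with $\sig_t(v)=\sig_t(w)+1$. I construct $s$ by setting $S_k=T_k$ for $k\notin\{i,j\}$, defining $S_i$ to be $T_i$ together with $T_j[v]$ attached via the edge $wv$ and rooted at $\rt(T_i)$, and letting $S_j$ be $T_j$ with $T_j[v]$ removed (rooted at $\rt(T_j)$ if $v\neq\rt(T_j)$; otherwise $S_j=T_\emptyset$). Since $T_j[v]$ consists of $v$ together with its descendants in $T_j$, both $S_i$ and $S_j$ remain rooted subtrees of $T$ and the collection $\{V(S_k)\}$ still partitions $V_{br}(T)$. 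The same signature computation then yields $\sig_s(v)=\sig_t(w)+1=\sig_t(v)$, so every clause of Definition~\ref{2104b} holds and $s$ is a reduction of $t$.

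The main obstacle is largely bookkeeping: one must verify carefully that moving $T_j[v]$ into $T_i$ produces valid rooted subtrees (handling in particular the degenerate case $v=\rt(T_j)$ where $S_j$ collapses to $T_\emptyset$) and that the path from $v$ to $\rt(S_i)$ in $S_i$ really factors through $w$. Both points rest on the uniqueness of paths in a tree and the convention that a rooted subtree is downward-closed under descendants, so once the setup is spelled out, the verification is routine.
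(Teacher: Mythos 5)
Your proof is correct and follows essentially the same route as the paper's: both directions are handled by contradiction via the single signature identity $\sig_s(v)=\sig_s(w)+1=\sig_t(w)+1$, with the explicit construction of the reduction $s$ in the non-canonical case matching the paper's construction exactly (including the degenerate case $v=\rt(T_j)$, where $S_j=T_\emptyset$). The only cosmetic difference is that you compute $\sig_s(v)$ directly from the path through $w$ rather than first noting $\sig_s(w)=\sig_t(w)$; the content is identical.
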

	\begin{proof}
		Let $t=\langle T_1,T_2,T_3,\dots , T_l \rangle$ be a $T$-admissible sequence. Suppose the sequence $t$ has no reduction. Assume $t$ is not canonical. By definition, some vertex $w\in V(T_i)$ is adjacent to a vertex $v\in V(T_j)$ such that $\sig_t(v)=\sig_t(w)+1$ for some $i<j$.
		Consider the sequence $s=\langle S_1,S_2,S_3,\dots,S_l\rangle$ such that $S_k=T_k$ for $k\neq \{i,j\}$, $\rt(S_i)=\rt(T_i)$, $\rt(S_j)=\rt(T_j)$ provided $v\neq \rt(T_j)$, $V(S_i)=V(T_i)\cup V(T_j[v])$, and $V(S_j)=V(T_j)\backslash V(T_j[v])$. Since $w\in V(S_i)$ and $\rt(S_i)=\rt(T_i)$, it follows that $\sig_t(v)=\sig_t(w)+1=\sig_s(w)+1=\sig_s(v)$. Clearly, $s$ is a reduction of $t$. Hence, a contradiction occurs.
		
		Conversely, suppose $t$ is a canonical sequence. Assume there exists a reduction $s=\langle S_1,S_2,S_3,\dots,S_l\rangle$ of $t$. This implies that for some integers $1\leq i<j\leq l$, $w\in V(T_i)$, and $v\in V(T_j)$ such that $w$ is adjacent to $v$, all the properties in Definition~\ref{2104b} hold.
		Since $w\in V(T_i)$ and $\rt(T_i)=\rt(S_i)$, by Remark~\ref{2104c}, it follows that $\sig_s(w)=\sig_t(w)$. Hence, $\sig_t(v)=\sig_s(v)=\sig_s(w)+1=\sig_t(w)+1$, which gives a contradiction because $t$ is canonical.
	\end{proof}

	\begin{lemma} \label{canonicalunique}
		Let $T$ be a homeomorphically irreducible tree. Suppose $s$ and $s'$ are $T$-admissible sequences of equal length $l$ with the same signature. If $s$ and $s'$ are both canonical, then $s=s'$.
	\end{lemma}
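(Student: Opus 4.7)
The plan is to show that when $t$ is canonical, the partition-index function $i_t : V_{br}(T) \to \{1, \ldots, l\}$, defined by $i_t(v) = i$ iff $v \in V(T_i)$, is entirely determined by $\sig_t$ and the underlying tree $T$. Once this is in place, the claim follows: since $s$ and $s'$ are both canonical with $\sig_s = \sig_{s'}$, we get $i_s = i_{s'}$, hence $V(S_i) = V(S'_i)$ for every $i$; moreover, $\sig_s$ attains the minimum value $i$ on $V(S_i)$ precisely at $\rt(S_i)$, so the root of $S_i$ is also recovered from the signature, and $S_i = S'_i$ as rooted subtrees of $T$.

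To prove $i_t$ is determined by $\sig_t$, for each branch vertex $v$ I introduce the set
\begin{equation*}
N^-(v) = \{w \in V_{br}(T) \mid w \text{ is adjacent to } v \text{ in } T \text{ and } \sig_t(w) = \sig_t(v) - 1\}.
\end{equation*}
The key claim, and precisely where canonicity enters, is that $v = \rt(T_{i_t(v)})$ if and only if $N^-(v) = \emptyset$. For the forward direction, if $v = \rt(T_{i_t(v)})$ and some $w \in N^-(v)$ lies in $V(T_j)$, then $j \leq \sig_t(w) = i_t(v) - 1 < i_t(v)$, so the pair $w \in V(T_j)$ adjacent to $v \in V(T_{i_t(v)})$ with $\sig_t(v) = \sig_t(w) + 1$ violates Definition~\ref{canonicaldefinition}. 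The reverse is immediate: a non-root $v$ has a predecessor $w$ in $T_{i_t(v)}$ satisfying $\sig_t(w) = \sig_t(v) - 1$, so $w \in N^-(v)$.

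This yields a recursion for $i_t$: if $N^-(v) = \emptyset$ then $i_t(v) = \sig_t(v)$; if $N^-(v) \neq \emptyset$ then the predecessor of $v$ in $T_{i_t(v)}$ lies in $N^-(v)$ with index $i_t(v)$, while canonicity forces every other $w \in N^-(v)$ (necessarily in some $V(T_j)$ with $j \neq i_t(v)$, and by the same argument as above with $j \geq i_t(v)$, hence $j > i_t(v)$) to have strictly larger index, giving $i_t(v) = \min\{i_t(w) : w \in N^-(v)\}$. Since every such $w$ satisfies $\sig_t(w) = \sig_t(v) - 1$, a strong induction on $\sig_t(v)$ (with base case $\sig_t(v) = 1$ forcing $N^-(v) = \emptyset$ and hence $i_t(v) = 1$) reconstructs $i_t$ entirely from $\sig_t$, and the lemma follows by applying this reconstruction to both $s$ and $s'$.

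The substantive point is pinpointing where canonicity is invoked, namely in ruling out that a root vertex can have a neighbour of smaller signature belonging to an earlier subtree, and then checking that any non-predecessor member of $N^-(v)$ must belong to a strictly later subtree. The remaining inductive recovery is mechanical.
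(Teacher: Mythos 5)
Your proof is correct. It turns on the same two facts as the paper's argument --- the predecessor of a non-root vertex within a part has signature exactly one less, and canonicity forbids such an adjacency pointing from an earlier part into a later one --- but the architecture is genuinely different. The paper argues by contradiction: it takes the first index $i$ with $S_i\neq S_i'$, notes the roots must agree by signature-minimality, picks a vertex of $V(S_i)$ that $s'$ assigns to a later part, and descends along its predecessor chain in $S_i$ until it finds an edge violating the canonicity of $s'$. You instead prove the stronger, constructive statement that for a canonical sequence the index function $i_t$ is determined by $\sig_t$ and $T$ alone, via the characterization that $v=\rt(T_{i_t(v)})$ if and only if $N^-(v)=\emptyset$, together with the recursion $i_t(v)=\min\{i_t(w) : w\in N^-(v)\}$; induction on the signature value replaces the paper's descent along predecessors. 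Your route is slightly longer but yields an explicit algorithm recovering a canonical sequence from its signature, whereas the paper's is shorter but gives only uniqueness. Two details worth making explicit in a polished version: a member of $N^-(v)$ lying in $V(T_{i_t(v)})$ is necessarily the predecessor of $v$ (a child of $v$ would have signature $\sig_t(v)+1$), which is what licenses applying the canonicity argument to every remaining member of $N^-(v)$; and empty parts are recovered as well, since $T_i=T_\emptyset$ exactly when $i$ is not in the range of $i_t$, so these also agree between $s$ and $s'$.
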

	
	\begin{proof}
		Suppose $s=\langle S_1,S_2,S_3,\dots,S_l\rangle$ and \mbox{$s'=\langle S'_1,S'_2,S'_3,\dots,S'_l\rangle$} are $T$-admissible sequences with the same signature such that both $s'$ and $s$ are canonical. We argue by contradiction. Assume $s\neq s'$. Say $S_i\neq S_i'$ and $S_k=S'_k$ for all $k<i$ for some $i$. Note that $\rt(S_i)=\rt(S_i')$ or else $\sig_s(\rt(S_i))<\sig_{s'}(\rt(S_i))$. Hence, without loss of generality, there exists $v\in V(S_i)$ such that $v\in V(S_j')$ for some $j>i$. Let $w_s$ be the predecessor of $v$ in $S_i$.
		If $w_s\notin S'_i$ (and thus $w_s\in V(S_j')$ for some $j>i$), then let $w_s$ be the newly chosen $v$. The process of passing to a new $v\in V(S_i)$ must end because the newly chosen $v$ cannot be the root of $S_i$. Hence, we may suppose $w_s\in V(S'_i)$. However, $v\in V(S'_j)$ and $\sig_{s'}(v)=\sig_s(v)=\sig_s(w_s)+1=\sig_{s'}(w_s)+1$. Therefore, it contradicts the cononicalness of the sequence $s'$.
	\end{proof}
	
	\begin{theorem}
		Let $T$ be a homeomorphically irreducible tree. Suppose $s$ and $s'$ are $T$-admissible sequences with the same signature. Then the order of any tree induced by $s$ of a given degree is equal to that of any tree induced by $s'$ of the same degree. 
	\end{theorem}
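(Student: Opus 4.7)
The plan is to reduce both $s$ and $s'$ to canonical $T$-admissible sequences via successive reductions, and then invoke Lemma~\ref{canonicalunique}. Since every reduction preserves the signature and the order of induced trees (Lemma~\ref{samesigandorder}), and since canonical sequences of equal length with equal signature must coincide (Lemma~\ref{canonicalunique}), this will force $s$ and $s'$ to induce trees of the same order of any given degree.

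To set up, I would first equalize the lengths of $s$ and $s'$ by appending trailing copies of $T_\emptyset$ to the shorter sequence. Because the empty tree contributes no branch vertices, the signature is unaffected; and inspecting Stage 2 of Definition~\ref{inducedatree}, the set of indices $i\in\{1,\dotsc,m\}$ that trigger an extension of size $2(m-i)+1$, namely $\{i : T_i=T_\emptyset \text{ or } i\geq l+1\}$, is unchanged when one appends $T_\emptyset$ beyond position $l$, so by Remark~\ref{2104d} the induced order is preserved as well. Next, starting from $s$ I repeatedly apply reductions. By Lemma~\ref{reductiontocanonical}, the procedure halts precisely when it reaches a canonical sequence $\hat{s}$; termination is guaranteed because each reduction transfers vertices from $T_j$ to $T_i$ with $i<j$, which strictly decreases the nonnegative integer $\sum_{k=1}^l k\cdot|V(T_k)|$. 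Lemma~\ref{samesigandorder} ensures that at every step the signature and the induced order are unchanged, so $\hat{s}$ shares both with $s$. Apply the same procedure to $s'$ to obtain a canonical $\hat{s'}$ sharing these quantities with $s'$.

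Now $\hat{s}$ and $\hat{s'}$ are both canonical, of equal length (the common length fixed by the padding step), and have equal signature (the common signature of $s$ and $s'$). Lemma~\ref{canonicalunique} therefore gives $\hat{s}=\hat{s'}$, so they induce trees of the same order of any given degree, and consequently so do $s$ and $s'$. The main technicality is the length-equalization step: one must verify carefully that appending trailing $T_\emptyset$ leaves both the signature and the induced order intact, which is the bookkeeping point indicated above. Once this is in place, the remainder is a direct chaining of Lemmas~\ref{samesigandorder}, \ref{reductiontocanonical}, and~\ref{canonicalunique}.
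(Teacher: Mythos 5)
Your proposal is correct and follows essentially the same route as the paper: pad the two sequences to a common length with trailing empty trees, reduce each to a canonical sequence while preserving signature and induced order via Lemma~\ref{samesigandorder} and Lemma~\ref{reductiontocanonical}, and conclude by the uniqueness of canonical sequences (Lemma~\ref{canonicalunique}). Your explicit termination measure $\sum_k k\cdot\vert V(T_k)\vert$ and the careful check that padding leaves the Stage~2 index set unchanged are slightly more detailed than the paper's remarks, but the argument is the same.
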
	
	
	\begin{proof}
		First of all, we make a key observation. Suppose $s$ is a $T$-admissible sequence. Let \mbox{$s=s_0, s_1, s_2, \dotsc, s_n$} be a sequence of \mbox{$T$-admissible} sequences such that $s_{i+1}$ is a reduction of $s_i$ for every $i\leq n-1$ and, furthermore, it terminates when $s_n$ has no further reduction. (Such a sequence must terminate, as when going to a reduction, some of the branch vertices are moved to the earlier terms.) By Lemma~\ref{samesigandorder}, the conclusion of the theorem holds for $s$ and $s_n$. Also, $s_n$ is canonical by Lemma~\ref{reductiontocanonical}.
		
		Now, suppose $s$ and $s'$ are $T$-admissible sequences with the same signature. First of all, we may assume $s$ and $s'$ have equal length, as extending any admissible sequence by empty sets will not change its collection of induced trees of a given degree. By our key observation, our proof is complete if canonical $T$-admissible sequences with a given signature and length are unique. This uniqueness is guaranteed by Lemma~\ref{canonicalunique}.	
	\end{proof}
	
	\begin{lemma}\label{080623b}
		Let $T$ be a homeomorphically irreducible tree and suppose $t=\langle T_1, T_2, \dotsc, T_l\rangle$ is a $T$-admissible sequence. Suppose $T''$ is a tree homeomorphic to $T$, where we identify their corresponding branch vertices. Let $m$ be an integer such that $m > \max\{\sig_t(v)\mid v\in V_{br}(T) \}$.
		Then $T''$ is a tree induced by $t$ of degree $m$ if and only if there exists a burning sequence $(x_1,x_2,\dotsc, x_m)$ of $T''$ such that the following holds:
		\begin{enumerate}
			\item the associated neighbourhoods are mutually disjoint;
			\item all leaves of $T''$ are burned in the last round;
			\item for each $1\leq i\leq l$, if $T_i\neq T_\emptyset$ then
			\begin{enumerate}
				\item $x_i= \rt(T_i)$;
				\item  $V(T_i)\subseteq N_{m-i}[x_i]$;
				\item there is no vertex of $T''$ of degree two that lies strictly in some path that connects a pair of distinct branch vertices within the neighbourhood $N_{m-i}[x_i]$.
			\end{enumerate}
		\end{enumerate}
	\end{lemma}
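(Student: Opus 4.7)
The plan is to prove the two directions of the equivalence separately. The central tool throughout is the distance identity $\dist(\rt(T_i), v) = \sig_t(v) - i$ for $v \in V(T_i)$, which pins down exactly how far each neighbourhood $N_{m-i}[\rt(T_i)]$ reaches beyond the branch vertices it contains.

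For the forward direction, given a tree $T''$ induced by $t$ of degree $m$, I would construct an explicit burning sequence. For each index $i$ with $T_i \neq T_\emptyset$ set $x_i := \rt(T_i)$; for each remaining index $i$ (where either $T_i = T_\emptyset$ with $i \leq l$ or $l+1 \leq i \leq m$), Definition~\ref{inducedatree} associates a Stage 2 extension of $2(m-i)+1$ consecutive new vertices to some arm or internal path, and I would let $x_i$ be the central vertex of that block. Using the distance identity, one verifies that $N_{m-i}[x_i]$ covers exactly $V(T_i)$ together with $m - \sig_t(v)$ vertices past each $v \in V(T_i)$ for Stage 1 sources, and exactly the corresponding block for Stage 2 sources. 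Because the Stage 1 extension amounts $m - \sig_t(v) - 1$ on arms and $2m - \sig_t(v) - \sig_t(v')$ on internal paths between different $T_i, T_j$ are calibrated precisely for this, these neighbourhoods tile $V(T'')$ without overlap and the distance constraints $d(x_i,x_j)\geq j-i$ are automatic. Conditions~(1), (2), (3a), and (3b) follow directly, and~(3c) holds because Stage 1 leaves any internal path lying strictly within a single $T_i$ of length one while Stage 2 never extends length-one internal paths.

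For the backward direction, assume a burning sequence $(x_1, \ldots, x_m)$ of $T''$ satisfies (1), (2), (3), and I would reconstruct $T''$ as a tree induced by $t$ of degree $m$. Conditions~(3a)--(3c) together with mutual disjointness first force the subtree of $T''$ spanning $V(T_i)$ to coincide with $T_i$ itself, since~(3c) rules out any degree-two vertex on paths between branches of $T_i$. The disjointness (1), the coverage identity $\bigcup_i N_{m-i}[x_i] = V(T'')$, and condition~(2) then pin down the remaining edge-lengths in $T''$: between branches $v \in V(T_i)$ and $v' \in V(T_j)$ adjacent in $T$ with $i \neq j$, the intermediate length equals $2m - \sig_t(v) - \sig_t(v')$ plus the sizes of any blocks $2(m-k)+1$ contributed by extra sources $x_k$ on that path; and for each arm from $v \in V(T_i)$, the length equals $m - \sig_t(v)$ plus the sizes of the analogous blocks. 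Reading these decompositions as Stage 1 and Stage 2 extensions of Definition~\ref{inducedatree} exhibits $T''$ as a tree induced by $t$ of degree $m$.

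The main obstacle, I expect, lies in the backward direction, specifically in showing that each extra burning source $x_k$ (with $T_k = T_\emptyset$ or $k > l$) contributes a clean block of $2(m-k)+1$ consecutive vertices on a single arm or internal path. A priori $x_k$ need not be degree-two, and its neighbourhood could in principle straddle a branch vertex or fan out across several arms meeting at a branch. Ruling these configurations out requires using disjointness with the neighbourhood of the source governing the containing $T_i$ (so that $x_k$ cannot approach a branch vertex from the relevant side without overlap) together with condition~(2) applied to any leaves reached by $N_{m-k}[x_k]$. Once $x_k$ is pinned inside a single arm or internal-path segment, the block structure and its Stage 2 interpretation follow uniquely from disjointness and coverage.
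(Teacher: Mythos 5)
Your proposal follows essentially the same route as the paper's proof: for the forward direction you place the sources at the roots $\rt(T_i)$ and at the midpoints of the Stage 2 blocks and verify coverage via the identity $\dist(\rt(T_i),v)=\sig_t(v)-i$, and for the backward direction you use disjointness and condition (2) to show each extra source's neighbourhood induces a path of order $2(m-k)+1$ and then read the remaining path lengths as Stage 1 plus Stage 2 extensions. The obstacle you flag (an extra source straddling a branch vertex) is resolved exactly as you suggest, since all branch vertices are already covered by the neighbourhoods of the nonempty $T_i$'s and disjointness then excludes them from $N_{m-k}[x_k]$; this matches the paper's argument.
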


	\begin{proof}
		Suppose	$T''$ is a tree induced by $t$ of degree $m$. 
		According to Definition~\ref{inducedatree}, $T''$ is obtained from $T$ in two stages and so let $T'$ denote the unique intermediate tree obtained in Stage $1$. We define a burning sequence $(x_1,x_2,\dotsc, x_m)$ of $T''$ in two stages as well. 
		For every $1\leq i\leq l$ such that $T_i\neq T_\emptyset$, let $x_i=\rt(T_i)$. First, we show that these burning sources burn all the vertices of $T''$ in $m$ rounds that can be identified with the vertices of $T'$. 
		
		Suppose $v$ and $v'$ are any two adjacent branch vertices of $T$ such that $v\in V(T_i)$ and $v'\in V(T_j)$ for some $i\neq j$. By Definition~\ref{inducedatree}, the internal path joining $v$ and $v'$ is extended by $2m-\sig_t(v)-\sig_t(v')$ to get $T'$ and there is no vertex of degree two that lies strictly in some path that connects a pair of branch vertices belonging to $V(T_i)$ in $T'$ and $T''$. Since $x_i = \rt(T_i)$, when the fire from $x_i$ burns $v$, there are still 
		$(m-i) - \dist(v, \rt(T_i))=  m- \sig_t(v)$
		number of rounds left. (From this observation, it can be deduced that $V(T_i)\subseteq N_{m-i}[x_i]$.) Similarly, when the fire from $x_j$ burns $v'$, there are still $m -\sig_t(v')$ number of rounds left. Hence, together the fires spread from $x_i$ and $x_j$ burns those $2m-\sig_t(v)-\sig_t(v')$ vertices.
		Similarly, for any arm of $T'$ joined to a branch vertex, say $v\in V(T_i)$, it can be shown that the $m-\sig_t(v)$ vertices of the arm are burned by the fire spread from $x_i$.
		
		Now, we consider the vertices added in Stage 2. Be reminded that the extensions are carried out sequentially according to  Definition~\ref{inducedatree}.	
		For each $1\leq i\leq m$ such that $T_i= T_\emptyset$ or $i\geq l+1$, by the definition, $2(m-i)+1$ vertices are added to some arm or internal path, which forms a path that is unburned by the burning sources from Stage 1 or that has been assigned thus far in Stage 2. These new vertices can be burned by choosing $x_i$ to be the middle vertex of the path. From our construction, it can be now be verified that the neighbourhoods associated to the burning sequence are mutually non-overlapping and all leaves of $T''$ are burned in the last round.

		Conversely, suppose there exists a burning sequence $(x_1,x_2,\dotsc, x_m)$ of $T''$ satisfying Properties (1)--(3).
		For each $1\leq i\leq m$ such that  $T_i=T_\emptyset$ or $i\geq l+1$, it can be deduced that the subtree of $T''$ induced by $N_{m-i}[x_i]$ is a path of order $2(m-i)+1$ due to the properties and also because $V(T_i)\cap V_{br}(T)=\emptyset$. Meanwhile, if $T_i\neq T_{\emptyset}$ and $v\in V(T_i)$, then for any arm of $T''$ joined to $v$ or any internal path of $T''$ joining $v$ to some branch vertex outside of $V(T_i)$, it can be deduced from the properties that
		$(m-i) - \dist(v, \rt(T_i))= m-\sig_t(v)$ of its vertices are included in $N_{m-i}[x_i]$. We can now see that $T''$ can be obtained as a tree induced by $t$ of degree $m$, where the vertices of the neighbourhoods that are paths are added exactly in Stage~2.	
	\end{proof}

	\begin{theorem}\label{3singleneigbourhood}
		Let $m>n\geq 1$. Suppose $T$ is a homeomorphically irreducible tree with $n$ branch vertices. 
		\begin{enumerate}
			\item Every tree induced by a $T$-admissible sequence of degree $m$ is $m$-burnable.
			\item Suppose $T_{max}$ has the biggest order when comparing to trees homeomorphic to $T$ that have burning number $m$. Then the following holds.
			\begin{enumerate}
				\item $T_{max}$ is a tree induced by some $T$-admissible sequence of degree $m$.
				\item If $t=\langle T_1, T_2, \dotsc, T_l\rangle$ is a $T$-admissible sequence such that $T_i = T_{\emptyset}$ and $T_{i+1}\neq T_{\emptyset}$ for some $1\leq i<l$, then $T_{max}$ cannot be a tree induced by $t$ of degree $m$.	
				\item If $t=\langle T_1, T_2, \dotsc, T_l\rangle$ is a $T$-admissible sequence such that  $\rt(T_i)$  and $\rt (T_j)$ are adjacent in $T$ for some $1\leq i< j\leq l$ with $j-i\neq 1$, then $T_{max}$ cannot be a tree induced by $t$ of degree $m$.
			\end{enumerate}
		\end{enumerate}
	\end{theorem}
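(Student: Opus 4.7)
Part~(1) is immediate from the forward direction of Lemma~\ref{080623b}, which constructs an explicit burning sequence of length~$m$ for any tree induced by a $T$-admissible sequence of degree~$m$.

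For part~(2)(a), fix an optimal burning sequence $(x_1,\ldots,x_m)$ of $T_{max}$. Since $T_{max}$ is maximally $m$-burnable, Corollary~\ref{bothsidemaximaldisjoint} gives pairwise disjoint associated neighbourhoods with all leaves burning last; Lemma~\ref{080623a} then supplies an $l$ for which $x_1,\ldots,x_l$ are branch vertices that together cover $V_{br}(T)$. Define $T_i$ as the rooted subtree of~$T$ with root~$x_i$ whose vertices correspond, under the homeomorphism identifying the branch vertices of $T$ and $T_{max}$, to the branch vertices lying in $N_{m-i}[x_i]$. Theorem~\ref{novertexbetween}(2) together with the homeomorphic irreducibility of $T$ ensures each $T_i$ is connected in~$T$, while disjointness of the neighbourhoods makes the $V(T_i)$ partition $V_{br}(T)$, so $t:=\langle T_1,\ldots,T_l\rangle$ is $T$-admissible. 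The reverse direction of Lemma~\ref{080623b}, applied to $t$ and the given burning sequence, certifies $T_{max}$ as induced by~$t$ of degree~$m$.

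Part~(2)(b) proceeds by contradiction. If $T_{max}$ were induced by such~$t$, Lemma~\ref{080623b} would furnish an optimal burning sequence in which $x_{i+1}=\rt(T_{i+1})$ is a branch vertex while $x_i$, located at the middle of a Stage~2 path of length $2(m-i)+1$, has degree two in~$T_{max}$. This contradicts Lemma~\ref{080623a}, which forces the branch-vertex sources in any optimal burning sequence of~$T_{max}$ to form an initial segment.

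For part~(2)(c), assume for contradiction that $T_{max}$ is induced by~$t$ with $\rt(T_i)$ and $\rt(T_j)$ adjacent in~$T$ and $\Delta:=j-i-1\geq 1$. By part~(2)(b), together with truncating empty tails (which preserves the induced order by Remark~\ref{2104d}), I may assume every $T_k$, $1\leq k\leq l$, is non-empty. Define the merged sequence $t^*$ of length $l-1$ by retaining $T_k$ for $k<i$ and $i<k<j$, setting $T^*_k=T_{k+1}$ for $j\leq k\leq l-1$, and letting $T^*_i$ be $T_i$ with the rooted subtree~$T_j$ grafted onto $\rt(T_i)$ across the edge $(\rt(T_i),\rt(T_j))$ (present in~$T$ by hypothesis). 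Then $t^*$ is $T$-admissible and, by part~(1), any tree $T^*$ induced by~$t^*$ of degree~$m$ is $m$-burnable; the plan is to show $|T^*|>|T_{max}|$, contradicting maximality. By Remark~\ref{2104d} the order of a tree induced by a $T$-admissible sequence $s$ of degree~$m$ decomposes as
\[|V_{br}(T)|+\sum_{v\in V_{br}(T)}a(v)\bigl(m-\sig_s(v)\bigr)+\sum_{\text{cross edges }(v,u)}\bigl(2m-\sig_s(v)-\sig_s(u)\bigr)+\text{(Stage 2 total)},\]
where $a(v)$ counts arms at~$v$ and a cross edge is an edge of~$T$ whose endpoints lie in distinct parts of~$s$. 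The merge has three effects: (i)~signatures in $V(T_j)$ drop by~$\Delta$ and those in $V(T_k)$ for $k>j$ drop by~$1$, boosting the arm and remaining-cross-edge contributions; (ii)~the edge $(\rt(T_i),\rt(T_j))$ becomes internal in~$t^*$, losing its $2m-i-j$ contribution; and (iii)~the sequence length decreases by one, so the Stage~2 total gains a fresh block of $2(m-l)+1$ vertices. The crucial input, using that every branch vertex has degree at least three in~$T$, is the lower bound $\sum_{v\in V(T_k)}(a(v)+b(v))=\sum_v\deg_T(v)-2(|V(T_k)|-1)\geq|V(T_k)|+2$ for each non-empty part~$V(T_k)$, where $b(v)$ counts cross edges incident to~$v$. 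Applied to $V(T_j)$ and to each $V(T_k)$ with $k>j$, this lets me cancel the $-(2m-i-j)$ loss against the Stage~2 and signature-shift gains and conclude that the net change is at least $\Delta+(l-j)\geq 1$. The main obstacle is executing this final accounting cleanly---in particular, tracking the Stage~2 index shifts and classifying cross edges by whether their endpoints lie in $V(T_j)$, among the early parts $V(T_k)$ with $k\leq j-1$, or among the late parts $V(T_k)$ with $k>j$.
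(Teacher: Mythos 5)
Your parts (1), (2)(a) and (2)(b) follow the paper's argument essentially verbatim: Lemma~\ref{080623b} for burnability, Corollary~\ref{bothsidemaximaldisjoint} plus Lemma~\ref{080623a} to extract the admissible sequence from an optimal burning sequence (with $V(T_i)=V_{br}(T)\cap N_{m-i}[x_i]$ and $\rt(T_i)=x_i$), and the contradiction with the initial-segment property of Lemma~\ref{080623a} for (2)(b); you might also note explicitly that $m>n$ guarantees $m>\max\sig_t$ so that Lemma~\ref{080623b} applies. For (2)(c) you take a genuinely different route. The paper keeps the sequence at length $l$ and simply empties part $j$ in place (setting $S_j=T_\emptyset$, $V(S_i)=V(T_i)\cup V(T_j)$), so the only signature changes are the drop of $j-i-1$ on $V(T_j)$, the Stage~2 gain sits at index $j$ itself, and the whole comparison collapses to the one-line identity $[2(m-j)+1]-[(2m-i-j)-2(j-i-1)]=j-i-1>0$, using only that $V(T_j)$ emits at least two arms or outgoing internal paths. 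Your merge-and-compress variant instead shortens the sequence to length $l-1$, which forces a Stage~2 gain of only $2(m-l)+1$ and a unit signature drop on every part $k>j$, so you must recover the deficit $2(l-j)$ from the later parts via the degree bound $\sum_{v\in V(T_k)}(a(v)+b(v))\geq |V(T_k)|+2\geq 3$. That accounting does close (the net gain works out to at least $l-i-1=(j-i-1)+(l-j)\geq 1$, with no harmful double-counting of cross edges), so your plan is sound, but you have left precisely this final computation unexecuted and flagged it as the obstacle; the paper's in-place construction is worth knowing because it makes the obstacle disappear entirely by localizing all changes to part $j$.
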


	\begin{proof}
		Part 1 follows immediately from Lemma~\ref{080623b}.
		
		For Part (2)(a), consider any burning sequence $(x_1,x_2,\dotsc, x_m)$ of $T_{max}$ that is optimal. 
		Since $T_{max}$ is maximally $m$-burnable, by Corollary~\ref{bothsidemaximaldisjoint}, the associated neighbourhoods are mutually disjoint
		and all leaves of $T_{max}$ are burned in the last round.
		By Lemma~\ref{080623a}, for some $l\leq  n$, we know that $x_1, x_2, \dotsc, x_l$ are all branch vertices and
		$V_{br}(T)= V_{br}(T_{max}) \subseteq \bigcup_{i=1}^l N_{m-i}[x_i]$.	
		For each $1\leq i\leq l$, let
		$T_i$ be the rooted subtree of $T$ with
		$V(T_i) = V_{br}(T)\cap N_{m-i}[x_i] $  and  $ \rt(T_i)=x_i$.
		Then $t=\langle T_1, T_2, \dotsc, T_l\rangle$ is a $T$-admissible sequence. Furthermore, it can be deduced that
		$m > \max\{\sig_t(v)\mid v\in V_{br}(T) \}$ because $m>n$ and $T_i\neq T_\emptyset$ for all $1\leq i \leq l$.
		By Theorem~\ref{novertexbetween}, for each $1\leq i\leq l$, there is no vertex of degree two that lies strictly in some path connecting a pair of distinct branch vertices within the neighbourhood
		$N_{m-i}[x_i]$. Therefore, by Lemma~\ref{080623b}, it follows that $T_{max}$ is  a tree induced by $t$ of degree $m$.
		
		Now for Part 2(b), assume $T_{max}$ is induced by some $T$-admissible sequence $t=\langle T_1, T_2, \dotsc, T_l\rangle$ of degree $m$ such that $T_i = T_{\emptyset}$ and $T_{i+1}\neq T_{\emptyset}$ for some $1\leq i<l$. By Lemma~\ref{080623b}, it can be deduced that for some burning sequence $(x_1,x_2,\dotsc, x_m)$ of $T_{max}$ that is optimal,  $x_{i+1}$ is a branch vertex while
		$N_{m-i}[x_i]$ contains no branch vertex. However, this contradicts Lemma~\ref{080623a}.	
		
		For Part 2(c), suppose $t=\langle T_1, T_2, \dotsc, T_l\rangle$ is a $T$-admissible sequence such that $\rt(T_i)$ and $\rt(T_j)$ are adjacent in $T$ for some $1\leq i<j\leq l$ with $j>i+1$. Consider the $T$-admissible sequence $s=\langle S_1, S_2, \dotsc, S_l\rangle$ defined by: 
		$$S_k= T_k \text{ for } k\notin \{i,j\}, S_j = T_\emptyset, V(S_i)= V(T_i)\cup V(T_j), \text{and }\rt(S_i)= \rt(T_i).$$
		For every $v\in V_{br}(T)$, if $v\in V(T_j)$ then	
		$\sig_t(v)=\dist(v, \rt(T_j))+j$ while
		$\sig_s(v)	= \dist(v, \rt(S_i))+i=
		\dist(v, \rt(T_j))+1+i$ and thus $\sig_t(v) = \sig_s(v)+(j-i-1)$; otherwise, it can be verified that $\sig_s(v)= \sig_t(v)$. Note that $\rt(T_i)$ is the only vertex in $T_i$ adjacent to some vertex in $T_j$. 
		
		Consider the unique trees $S'$ and $T'$ obtained from $T$ induced by $s$ and $t$, respectively, of degree $m$ in Stage 1. In getting $T'$, the internal path in $T$ joining $\rt(T_i)$ and $\rt(T_j)$ is extended by $2m-i-j$ vertices while the same internal path is not extended in getting $S'$. On the other hand,
		for any arm of $T$ joined to a vertex in $T_j$
		or any internal path of $T$ joining a vertex in $T_j$ and a branch vertex outside $V(T_i)\cup V(T_j)$, it is extended by an extra $j-i-1$ vertices in getting $S'$ compared to getting $T'$
		because $\sig_t(v) = \sig_s(v)+(j-i-1)$ for any $v\in V(T_j)$. There are at least two such arms or internal paths combined, and this can be deduced by considering any leaf of $T_j$ (or $\rt(T_j)$ if $T_j$ has only one vertex). Meanwhile, the effect on any other arm or internal path of $T$ is the same in  getting $S'$ and $T'$.	
		Therefore, $\vert V(T')\vert - \vert V(S')\vert$ is at most $(2m-i-j)-2(j-i-1)$.
		
		However, any tree induced by $s$  gets an extra $2(m-j)+1$ vertices in Stage 2 compared to that induced by $t$ because $S_j=T_\emptyset$. Therefore, since
		$$[2(m-j)+1]- [(2m-i-j)-2(j-i-1)]= j-i-1>0,$$
		it follows that the order of any tree induced by $s$ is strictly larger than that of any tree induced by $t$ of the same degree. By Part 1, any tree induced by $s$ of degree $m$ is $m$-burnable. Hence, $T_{max}$ cannot be a tree induced by $t$ of degree $m$ for otherwise, we would have a larger homeomorphic tree with burning number $m$.
	\end{proof}

	\section{Case Study: Four Branch Vertices} \label{application}
	By Theorem~\ref{3singleneigbourhood} (Part 2a), if a tree $T'$ is \emph{extremal} in the sense that it has the biggest order among homemorphic trees that share equal burning numbers, then $T'$ is induced by a $T$-admissible sequence where $T$ is the homeomorphically irreducible tree homeomorphic to $T'$. Therefore, identifying such trees $T'$ with burning number $m$ by comparing the corresponding canonical $T$-admissible sequences would be a good strategy. By Theorem~\ref{3singleneigbourhood}, some of these sequences can be eliminated from our consideration. In this section, we will illustrate how our developed idea in Section~\ref{T-admissiblesequence} can be used to identify extremal trees with four branch vertices bearing a prescribed burning number through canonical $T$-admissible sequences. As an emphasis, throughout this and the next section, by calling a tree extremal means that it has the biggest order among trees that are sharing the same burning number and homeomorphic to the given tree.
	
	A tree with four branch vertices can be in the form of $A-B-C-D$ or \chemfig{A-B(-[2]D)(-C)} where $``-"$ represents an internal path or an edge between any two branch vertices. We let $\deg(A)=a$, $\deg(B)=b$, $\deg(C)=c$, and $\deg(D)=d$. In this section, for simplicity, we adopt some notation to represent a rooted tree. For instance, we write $t=\langle A_{BC},D \rangle$ for a canonical $T$-admissible sequence with $V(T_1)=\{A,B,C\}$ where $A$ is the root of $T_1$ and $V(T_2)= \{D\}$ where $D$ is the root of $T_2$.
	
	
	First, we consider a tree in the form of $A-B-C-D$ and let $t$ be a canonical $T$-admissible sequence of such tree. Suppose $\rt(T_1)=A$. Since $t$ is canonical, $B$ is not the root of $T_2$. Also, by Theorem~\ref{3singleneigbourhood} (Part 2c), $B$ is neither the root of $T_3$ nor $T_4$. Hence, under the condition that $t$ is canonical, we only consider the following possibilities of $t$: $\langle A_{BCD}\rangle$, $\langle A,C_{BD}\rangle$, $\langle A_B,C_D\rangle$, $\langle A,D_{CB}\rangle$, and $\langle A_{BC},D\rangle$. However, $\langle A_{BCD}\rangle$ is a reduction of $\langle A_{BC},T_\emptyset,T_\emptyset,D\rangle$, $\langle A,C_{BD}\rangle$ is a reduction of $\langle A,C_D,B\rangle$, and  $\langle A,D_{CB}\rangle$ is a reduction of $\langle A,D_C,T_\emptyset,B\rangle$. By Lemma~\ref{samesigandorder} and Theorem~\ref{3singleneigbourhood} (Part 2b and 2c), they cannot induce extremal trees with burning number $m$. Therefore, when $\rt(T_1)=A$, the only two possible $t$ that need to be taken into consideration are $\langle A_B,C_D\rangle$ and $\langle A_{BC},D\rangle$.
	
	Now, suppose $\rt(T_1) = B$. Since $t$ is canonical, neither $A$ nor $C$ can be the root of $T_2$. According to Theorem~\ref{3singleneigbourhood} (Part 2c), the root of $T_i$ for $i \in \{3,4\}$ cannot be $A$ or $C$. Also, if $T_2$ is nonempty, then the root of $T_2$ must be $D$. 
	Therefore, the possible canonical $T$-admissible sequences are $\langle B_{ACD}\rangle$, $\langle B_{AC},D\rangle$ and $\langle B_A,D_C\rangle$. Since $\langle B_{ACD}\rangle$ is a reduction of $\langle B_{AC},T_\emptyset,D\rangle$ and $\langle B_A,D_C\rangle$ is a reduction of $\langle B_A,D,C\rangle$, we conclude that $t$ can only be $\langle B_{AC},D\rangle$. Similarly, if $\rt(T_1)=C$ or $\rt(T_1)=D$, then $t$ can only be $\langle C_{DB},A\rangle$, $\langle D_C,B_A\rangle$, and $\langle D_{CB},A\rangle$.
	
	Therefore, we can see that there are a total of six canonical $T$-admissible sequences that can possibly induce an extremal tree with a pre-determined burning number, namely: $\langle A_B,C_D\rangle$, $\langle A_{BC},D\rangle$, $\langle B_{AC},D\rangle$, $\langle C_{DB},A\rangle$, $\langle D_C,B_A\rangle$, and $\langle D_{CB},A\rangle$. 
	
	Now, we compare the number of vertices for any tree induced by each of these six canonical \mbox{$T$-admissible sequences}. Note that all these six possible sequences are of the same length. Following this, the number of vertices added in Stage 2 according to Definition~\ref{inducedatree} for all these six sequences are equal. Thus, the difference in the order of trees induced by each of these six sequences depends on Stage 1 only. Table~\ref{orderinducedbyinstage1} shows the number of vertices added in Stage 1 to get the tree induced by each of the six canonical $T$-admissible sequences. Meanwhile, Table \ref{conditionondegreeforcanonicalsequence} shows the difference in the number of vertices added by the corresponding pair of canonical $T$-admissible sequences in Stage 1. If all the entries in a row are non-negative, then this implies that the corresponding canonical $T$-admissible sequence induces a tree with a larger or equal number of vertices compared to any tree induced by any of the other five canonical $T$-admissible sequences. 
	\begin{table}[htb]
		\centering
		\captionof{table}{The number of vertices added in Stage 1 for the case $A-B-C-D$}\label{orderinducedbyinstage1}
		\scalebox{0.95}{
			\begin{tabular}{|M{3cm}|M{13.5cm}|}
				\hline
				Canonical $T$-admissible sequence & The number of vertices added in getting the tree induced by the canonical $T$-admissible sequence of degree $m$ in Stage 1\\ \hline
				$\langle A_B,C_D\rangle$ &  $(a-1)(m-2)+(b-2+c-2)(m-3)+(d-1)(m-4)+2m-4$\\ \hline
				$\langle A_{BC},D\rangle$ & $(a-1)(m-2)+(b-2+d-1)(m-3)+(c-2)(m-4)+2m-5$ \\ \hline
				$\langle B_{AC},D\rangle$&$(b-2)(m-2)+(a-1+c-2+d-1)(m-3)+2m-4$ \\ \hline
				$\langle C_{BD},A\rangle$ & $(c-2)(m-2)+(a-1+b-2+d-1)(m-3)+2m-4$ \\ \hline
				$\langle D_C,B_A\rangle$ & $(d-1)(m-2)+(b-2+c-2)(m-3)+(a-1)(m-4)+2m-4$ \\ \hline
				$\langle D_{CB},A\rangle$ & $(d-1)(m-2)+(a-1+c-2)(m-3)+(b-2)(m-4)+2m-5$ \\ \hline
			\end{tabular}
		}
		
	\end{table}
	
	\begin{table}[htb]
		\centering
		\captionof{table}{The difference in the numbers of vertices added by the pair of canonical sequences}\label{conditionondegreeforcanonicalsequence}
		\scalebox{0.85}{
			\begin{tabular}{|M{2.5cm}|M{2.5cm}|M{2.5cm}|M{2.5cm}|M{2.5cm}|M{2.5cm}|M{2.5cm}|}
				\hline
				Canonical $T$-admissible sequence & $\langle A_B,C_D\rangle$ & $\langle A_{BC},D\rangle$& $\langle B_{AC},D\rangle$& $\langle C_{BD},A\rangle$& $\langle D_C,B_A\rangle$& $\langle D_{CB},A\rangle$\\ \hline
				$\langle A_B,C_D\rangle$ &0&$c-d$&$a-b-d+2$&$a-c-d+2$&$2a-2d$&$a+b-2d$ \\ \hline
				$\langle A_{BC},D\rangle$ & $d-c$ & 0 & $a-b-c+2$ & $a-2c+2$ & $2a-c-d$ & $a+b-c-d$ \\ \hline
				$\langle B_{AC},D\rangle$ &$b+d-a-2$&$b+c-a-2$&0&$b-c$&$a+b-d-2$&$2b-d-2$ \\ \hline
				$\langle C_{BD},A\rangle$ &$c+d-a-2$&$2c-a-2$&$c-b$&0&$a+c-d-2$&$b+c-d-2$\\ \hline
				$\langle D_C,B_A\rangle$ &$2d-2a$&$c+d-2a$&$d-a-b+2$&$d-a-c+2$&0&$b-a$\\ \hline
				$\langle D_{CB},A\rangle$ &$2d-a-b$&$c+d-a-b$&$d-2b+2$&$d-b-c+2$&$a-b$&0 \\ \hline
			\end{tabular}
		}
	\end{table}
	The second column of Table~\ref{tablefor4branchverticesABCD} presents a list of conditions on the degrees of the branch vertices that exhaust all possibilities for the case of $A-B-C-D$. Under each of these sets of conditions, our analysis shows that the corresponding $T$-admissible sequence in the third column in Table~\ref{tablefor4branchverticesABCD} is the only sequence from Table~\ref{conditionondegreeforcanonicalsequence} such that all entries in its row are nonnegative. Hence, the sequence induces extremal trees with burning number $m$.
	
	\begin{center}
		\begin{table}[htb]
			\setlength\extrarowheight{5pt}
			\captionof{table}{Extremal trees for the case $A-B-C-D$}\label{tablefor4branchverticesABCD}
			\scalebox{0.85}{
				\begin{tabular}{ |M{2.5cm}|M{10cm}|M{4.5cm}| } 
					\hline
					Form of tree with 4 branch vertices& Conditions on the degrees of the branch vertices & Canonical $T$-admissible sequence $t$ that induces extremal trees with burning number $m$\\ \hline
					\multirow{12}{*}{A-B-C-D}
					&$b\geq \max\{a,c,d\}$&$\langle B_{AC},D\rangle$\\ \cline{2-3}
					&$a>b\geq \max\{c,d\}$ and $b+\min\{c,d\}\geq a+2$&$\langle B_{AC},D\rangle$\\ \cline{2-3}
					&$a>b\geq c\geq d$ and $a+2\geq b+d$&$\langle A_B,C_D\rangle$\\ \cline{2-3}
					&$a>b\geq d\geq c$ and $a+2\geq b+c$&$\langle A_{BC},D\rangle$\\ \cline{2-3}
					&$a> c\geq \max\{b,d\}$ and $c+d\geq a+2$&$\langle C_{BD},A\rangle$\\ \cline{2-3}
					&$a>c\geq \max\{b,d\}$ and $a+2\geq c+d$&$\langle A_B,C_D\rangle$\\ \cline{2-3}
					&$a\geq d\geq b \geq c$ and $b+c\geq a+2$&$\langle B_{AC},D\rangle$\\ \cline{2-3}
					&$a\geq d\geq b \geq c$ and $a+2\geq b+c$&$\langle A_{BC},D\rangle$\\ \cline{2-3} 
					
					&$a\geq d\geq c \geq b$ and $2c\geq a+2$ and $ b+c \geq d +2 $&$\langle C_{BD},A\rangle$\\ \cline{2-3}
					&$a\geq d\geq c \geq b$ and $2c\geq a+2$ and $ d+2 \geq b+c $&$\langle D_{CB},A\rangle$\\ \cline{2-3}
					&$a\geq d\geq c \geq b$ and $a+2 \geq 2c$ and $c+d \geq a+b $&$\langle D_{CB},A \rangle$\\ \cline{2-3}
					&$a\geq d\geq c \geq b$ and $a+2 \geq 2c$ and $a+b \geq c+d $&$\langle A_{BC},D
					\rangle$\\
					\hline			
				\end{tabular}
			}
		\end{table}
	\end{center}
	\newpage
	Now, we apply the a similar idea for the case \chemfig{A-B(-[2]D)(-C)}. Let $t$ be a canonical $T$-admissible sequence for a homeomorphically irreducible tree $T$ in the form of \chemfig{A-B(-[2]D)(-C)} where $t$ induces extremal trees with a prescribed burning number. Here, we may assume that $a\geq c \geq d$ for $T$ of this form. We claim that, given a burning number, there are only two possible canonical $T$-admissible sequences that can induce extremal trees (see Table~\ref{tablefor4branchverticesTshape}). For the case $\rt(T_1)=B$, since $t$ is canonical, $A$, $C$ and $D$ are not the roots of $T_2$. Thus, the only possible canonical $T$-admissible sequence is $\langle B_{ACD}\rangle$.
	
	For the case $\rt(T_1)=A$, similarly, $B$ is not the root of $T_2$. Also, by Theorem~\ref{3singleneigbourhood} (Part 2c), $B$ is neither the root of $T_3$ nor $T_4$. Therefore, we have nine possible canonical sequences: $\langle A_{BCD}\rangle$, $\langle A_{BD},C\rangle$, $\langle A,C_B,D\rangle$, $\langle A,C,D_B\rangle$, $\langle A,C_{BD}\rangle$, $\langle A_{BC},D\rangle$, $\langle A,D_B,C\rangle$, $\langle A,D,C_B\rangle$, and $\langle A,D_{BC}\rangle$ .
	
	Note that $\langle A_{BCD}\rangle$ is a reduction of $\langle A_{BC},T_\emptyset,D\rangle$, $\langle A,C,D_B\rangle$ is a reduction of $\langle A,C,D,B\rangle$, $\langle A,C_{BD}\rangle$ is a reduction of $\langle A,C,B_D\rangle$, $\langle A,D,C_B\rangle$ is a reduction of $\langle A,D,C,B\rangle$, and $\langle A,D_{BC}\rangle$ is a reduction of $\langle A,D,B_C\rangle$. Hence, they cannot induce trees with a prescribed burning number that are extremal according to Lemma~\ref{samesigandorder} and Theorem~\ref{3singleneigbourhood} (Part 2b and 2c). Therefore, if $\rt(T_1)=A$, then $t$ can only be $\langle A_{BD},C\rangle$, $\langle A,C_B,D\rangle$, $\langle A_{BC},D\rangle$, and $\langle A,D_B,C\rangle$. 
	
	By using a similar argument, if $\rt(T_1)=C$,  then $t$ can only be $\langle C_{BD},A\rangle$, $\langle C,A_B,D\rangle$, $\langle C_{BA},D\rangle$, $\langle C,D_B,A\rangle$. If $\rt(T_1)=D$, then $t$ can only be $\langle D_{BA},C\rangle$, $\langle D,C_B,A\rangle$, $\langle D_{BC},A\rangle$, and $\langle D,A_B,C\rangle$. 
	
	Now, we claim that $\langle A_{BD},C\rangle$ induces a tree with a larger order than that of $\langle A_{BC},D\rangle$, $\langle C_{BD},A\rangle$, $\langle C_{BA},D\rangle$, $\langle D_{BA},C\rangle$ and $\langle D_{BC},A\rangle$. Note that these six sequences are all of the same length. Hence, it is sufficient to just consider the vertices added in Stage 1 as in Definition \ref{inducedatree}. The vertices added in Stage 1 by $\langle A_{BD},C\rangle$ is $(a-1)(m-2)+(b-3+c-1)(m-3)+(d-1)(m-4)+(2m-4)$. Note that the only internal path being extended in Stage 1 by these six sequences is of equal length and consists of $2m-4$ vertices. This is because the extended internal path is always between $B$ and $\rt(T_2)$ and $\sig_t(B)=\sig_t(\rt(T_2))=2$ for $t$ being any of the six sequences. Therefore, the difference in the number of vertices added by $\langle A_{BD},C\rangle$ and the rest of the five sequences depends on the coefficients of $(m-2), (m-3)$ and $(m-4)$. With the assumption $a\geq c\geq d$, this implies that $\langle A_{BD},C\rangle$ induces the largest order among these six sequences. 
	
	Using a similar argument, we claim that $\langle A,C_B,D\rangle$  induces a larger order of tree than that of  $\langle A,D_B,C\rangle$, $\langle C,A_B,D\rangle$, $\langle C,D_B,A\rangle$, $\langle D,C_B,A\rangle$, and $\langle D,A_B,C\rangle$.
	
	Therefore, for a homeomorphically irreducible tree $T$ in the form of \chemfig{A-B(-[2]D)(-C)} and with the assumption $a\geq c \geq d$, to obtain trees that are extremal with a specified burning number, if suffices to take into consideration one of three possible canonical $T$-admissible sequences, namely:  $\langle B_{ACD}\rangle$, $\langle A_{BD}, C\rangle$, and $\langle A, C_B, D\rangle$.
	
	Note that these canonical sequences are of different lengths. Table~\ref{verticesintshape} shows the total number of vertices added in getting any tree induced by each of the canonical $T$-admissible sequences of degree $m$ in both stages as in Definition~\ref{inducedatree}. Table~\ref{difference2} shows the difference in the number of vertices between pairs of canonical $T$-admissible sequences in Stage 1. A row of non-negative entries indicates that the corresponding canonical $T$-admissible sequence induces a tree with at least as many vertices as any other canonical $T$-admissible sequence. Thus, using Table~\ref{difference2}, we could identify the canonical $T$-admissible sequence that induces trees that are extremal with a pre-determined burning number, as shown in Table~\ref{tablefor4branchverticesTshape}.

	\begin{table}[htb] 
		\centering
		\setlength\extrarowheight{4pt}
		\captionof{table}{The number of vertices added in both Stage 1 and 2 for the case \protect\chemfig{A-B(-[2]D)(-C)}} \label{verticesintshape}
		\scalebox{0.85}{
			\begin{tabular}{|M{3cm}|M{13.5cm}|}
				\hline
				Canonical $T$-admissible sequence & The number of vertices added in getting any tree induced by the canonical $T$-admissible sequence of degree $m$\\ \hline
				$\langle B_{ACD}\rangle$
				& $(b-3)(m-2)+(a-1+c-1+d-1)(m-3)+(m-1)^2$ \\ \hline
				$\langle A_{BD},C\rangle$
				& $(a-1)(m-2)+(b-3+c-1)(m-3)+(d-1)(m-4)+(2m-4)+(m-2)^2$ \\  \hline
				$\langle A,C_B,D\rangle$
				& $(a-1)(m-2)+(c-1)(m-3)+(b-3+d-1)(m-4)+(2m-4)+(2m-6)+(m-3)^2$ \\ \hline
			\end{tabular}
		}
	\end{table}

	\begin{table}[htb]
		\centering
		\setlength\extrarowheight{4pt}
		\captionof{table}{The difference in the numbers of vertices added by the pair of canonical sequences} \label{difference2}
		\scalebox{0.85}{
			\begin{tabular}{|M{4cm}|M{2.8cm}|M{2.8cm}|M{2.8cm}|}
				\hline
				Canonical $T$-admissible sequence & $\langle B_{ACD}\rangle$ & $\langle A_{BD},C\rangle$ & $\langle A,C_B,D\rangle$\\ \hline
				
				$\langle B_{ACD}\rangle$ &0&$-a+b+d-2$&$-a+2b+d-3$\\ \hline
				$\langle A_{BD},C\rangle$&$a-b-d+2$&0&$b-2$\\ \hline
				$\langle A,C_B,D\rangle$&$a-2b-d+3$&$2-b$&0\\ \hline
				
			\end{tabular}
		}
	\end{table}

	
	\begin{center}
		\begin{table}[htb]
			\setlength\extrarowheight{5pt}
			\captionof{table}{Extremal trees for the case \protect\chemfig{A-B(-[2]D)(-C)} }\label{tablefor4branchverticesTshape}
			\scalebox{0.85}{
				\begin{tabular}{ |M{2.5cm}|M{10cm}|M{4.5cm}| } 
					\hline				
					Form of tree with 4 branch vertices& Conditions on the degrees of the branch vertices & Canonical $T$-admissible sequence $t$ that induces extremal trees with burning number $m$\\ \hline
					\multirow{2}{*}{\chemfig{A-B(-[2]D)(-C)}}
					&$a\geq c\geq d$ and $b+d\geq a+2$&$\langle B_{ACD}\rangle$\\ \cline{2-3}
					&$a\geq c\geq d$ and $ a+2\geq b+d$&$\langle A_{BD},C\rangle$\\
					\hline
				\end{tabular}
			}
		\end{table}
	\end{center}

	\section{Results on the Smallest Diameter} \label{smallestdiameter}
	If a graph is connected, its burning number is bounded by $r+1$, where $r$ is the radius of the graph. This is obviously true when considering a burning sequence with the initial burning source positioned at the centre of the graph. In this section, we study the smallest possible diameter of an extremal $n$-spider with a prescribed burning number. (As a reminder, by being extremal, it means the $n$-spider has the biggest order among $n$-spiders that share equal burning numbers.)
	
	\begin{lemma}\label{differatmostone}
		Suppose $T$ is an $n$-spider such that the length of any two arms differ by at most one. Then $T$ has the smallest diameter among all $n$-spiders having the same order.
	\end{lemma}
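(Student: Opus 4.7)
The plan is to exploit the fact that for any $n$-spider with $n\geq 2$, the diameter equals the sum of the lengths of its two longest arms. Indeed, every path in a spider is contained in the union of at most two arms meeting at the head, so if $\ell_1\leq \ell_2\leq \dotsb \leq \ell_n$ are the arm-lengths of $T$ arranged non-decreasingly, then $\diam(T)=\ell_{n-1}+\ell_n$. Since the order of an $n$-spider equals $1+\sum_{i=1}^n \ell_i$, two $n$-spiders share the same order if and only if their arm-length sequences sum to the same value $S$. Consequently, the lemma reduces to a purely combinatorial statement: among all compositions of $S$ into $n$ positive integer parts, the sum of the two largest parts is minimised whenever the parts differ pairwise by at most one.

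To establish this combinatorial statement, I would use a balancing exchange. Assume $(\ell_1,\dotsc,\ell_n)$ is sorted non-decreasingly and $\ell_n-\ell_1\geq 2$, and consider the replacement $\ell_1\mapsto \ell_1+1$ and $\ell_n\mapsto \ell_n-1$, which preserves $S$ and keeps every entry positive. After re-sorting, I claim the sum of the two largest entries of the new multiset is at most $\ell_{n-1}+\ell_n$. In the case $\ell_n-1\geq \ell_{n-1}$, the new top two are $\ell_n-1$ and $\ell_{n-1}$, giving a strict decrease of one; in the case $\ell_n=\ell_{n-1}$, the new top two are $\ell_{n-1}$ together with the larger of $\ell_{n-2}$ and $\ell_n-1$, and a direct comparison shows the sum is unchanged. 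Hence the two-largest sum never increases under this exchange.

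Finally, iterating this exchange must terminate, since a functional such as $\sum_i (\ell_i-\lfloor S/n\rfloor)^2$ strictly decreases at each genuine step. The terminal configuration satisfies $\ell_n-\ell_1\leq 1$ and is therefore the balanced one, which is unique up to permutation. Because the two-largest sum is monotonically non-increasing along the iteration, the balanced composition attains the minimum, and identifying $\diam(T)$ with that sum yields the lemma. The step requiring the most care is the sub-case $\ell_n=\ell_{n-1}$ of the swap verification, because the new maximum could come either from the old $\ell_{n-1}$ position or from the demoted $\ell_n-1$; apart from that, the rest is straightforward bookkeeping.
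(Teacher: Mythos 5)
Your proof is correct, and it takes a genuinely different route from the paper's. Both arguments hinge on the same key observation that $\diam(T)$ equals the sum of the two longest arm lengths, so the problem becomes: among compositions of a fixed $S$ into $n$ positive parts, the balanced one minimises the sum of the two largest parts. The paper proves this by a direct counting contradiction: it supposes a competitor $T'$ of the same order with strictly smaller two-largest sum, notes that the balanced spider satisfies $l_i\geq l_1-1$ for all $i\geq 3$, deduces that the remaining arms of $T'$ must carry total length exceeding $(n-2)(l_1-1)$, hence $l'_3\geq l_1$ by pigeonhole, and concludes $l'_1+l'_2\geq 2l'_3\geq 2l_1\geq l_1+l_2$ --- a contradiction in four lines. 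You instead run a smoothing/exchange argument ($\ell_1\mapsto\ell_1+1$, $\ell_n\mapsto\ell_n-1$) with a quadratic potential to guarantee termination at the unique balanced multiset, showing the two-largest sum never increases along the way. Your route is longer but more robust (it is the standard template for ``balanced configurations are extremal'' and would adapt to other objective functions); the paper's is shorter but more ad hoc. One small slip in your case analysis: when $\ell_n-1\geq\ell_{n-1}$ it can happen that $\ell_1+1>\ell_{n-1}$ (namely when $\ell_1=\ell_2=\dotsb=\ell_{n-1}$, e.g.\ arm lengths $(1,1,5)$ becoming $(2,1,4)$), in which case the new top two are $\ell_n-1$ and $\ell_1+1$ and the sum is unchanged rather than strictly decreased by one; symmetrically, in the case $\ell_n=\ell_{n-1}$ the sum can strictly decrease rather than stay fixed. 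Neither misidentification harms the argument, since your conclusion only uses that the two-largest sum is non-increasing, which holds in every case.
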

	
	\begin{proof}
		Assume the conclusion is false. Then there exists an $n$-spider $T'$ with $|T'|=|T|$ and $\diam(T)>\diam(T')$. Let the arm lengths of $T$ (respectively, $T'$) be $l_1\geq l_2 \geq \dots \geq l_n$ (respectively, $l'_1\geq l'_2 \geq \dots \geq l'_n$). By our assumption, $l_1+l_2>l'_1+l'_2$. Let $l_1+l_2-(l'_1+l'_2)=k$ for some $k\in\mathbb{Z^+}$. By the hypothesis, $l_i\geq l_1-1$ for all $3\leq i \leq n$. Since $|T'|=|T|$, it follows that $$\sum_{i=3}^{n}l'_i=\sum_{i=3}^{n}l_i+(l_1+l_2)-(l'_1+l'_2)\geq (n-2)(l_1-1)+k\geq (n-2)(l_1-1)+1.$$
		
		This implies that $l'_3\geq l_1$ or else $\sum_{i=3}^{n}l'_i\leq (n-2)(l_1-1)$. However, this means that $l'_1+l'_2\geq 2l'_3\geq 2l_1\geq l_1+l_2$, which gives a contradiction.
	\end{proof}
	
	Before diving into the main result of this section, let us made an observation about the properties of extremal $n$-spiders that possess a given burning number. This observation can be verified by Theorem~\ref{novertexbetween} or our findings in Section~\ref{T-admissiblesequence}.
	
	Observation: Let $m\geq 2$ and $n\geq 3$. An $n$-spider $T'$ is extremal with burning number $m$ if and only if $T'$ has a burning sequence of length $m$ such that the neighbourhoods associated to the sequence are mutually non-overlapping, the head is chosen as where to put the first burning source, and all leaves are burned in the last round.
	
	\begin{theorem}
		Let $n\geq 3$. Assume $T$ is an extremal $n$-spider with burning number $m$, and moreover it has the smallest diameter among such $n$-spiders. 
		\begin{enumerate}
			\item If $3\leq m\leq 2n-1$, then the diameter of $T$ is $6m-10$.
			\item  If $m$ belongs to one of the following cases, then $T$ is the unique $n$-spider (up to isomorphism) of order $n(m-1)+1+(m-1)^2$ such that the length of any two arms differ by at most one:
			\begin{enumerate}
				\item $m=2nq+i$ for some $q\geq 1$ and $i\in\{0,1,2\}$;
				\item $\left\lfloor\frac{(m-1)^2}{n}\right\rfloor \geq L_n$.
			\end{enumerate}
		\end{enumerate}
	\end{theorem}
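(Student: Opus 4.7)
The plan rests on the Observation preceding the theorem: any extremal $n$-spider $T$ with burning number $m$ admits a burning sequence $(x_1,\dotsc,x_m)$ with $x_1$ at the head, mutually disjoint associated neighbourhoods, and every leaf burned in round $m$. This forces every arm of $T$ to have length at least $m-1$ (otherwise its leaf would burn before round $m$), and summing the disjoint balls pins down $|V(T)|=n(m-1)+1+(m-1)^2$. Outside $N_{m-1}[x_1]$, one \emph{tail} per arm forms a path forest of total order $(m-1)^2$, with each source $x_i$ (for $i\geq 2$) occupying a sub-path of length $2(m-i)+1$ sitting disjointly inside some tail and ending precisely at the leaf when the leaf of that arm lies in the ball.

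For Part~1, the lower bound $6m-10$ comes from locating $x_2$ and $x_3$. Since $x_2$ occupies a sub-path of length $2m-3$ inside some tail, the arm containing $x_2$ has length at least $(m-1)+(2m-3)=3m-4$. If $x_3$ lies in a different arm, that arm has length at least $3m-6$, so the two longest arms sum to at least $6m-10$. If instead $x_3$ shares the arm with $x_2$, that arm has tail at least $(2m-3)+(2m-5)=4m-8$, hence length at least $5m-9$; the second-longest arm still has length at least $m-1$, so the diameter is again at least $(5m-9)+(m-1)=6m-10$. The matching upper bound is an explicit construction: place $x_2$ alone in an arm of length $3m-4$, $x_3$ alone in an arm of length $3m-6$, and distribute the remaining sources as pairs $(x_4,x_m),(x_5,x_{m-1}),\dotsc$ (with one unpaired middle source when $m$ is even) so that each auxiliary arm has length at most $3m-6$. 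The number of arms used is $2+\lceil(m-3)/2\rceil$, which is at most $n$ precisely when $m\leq 2n-1$; leftover arms carry no source and have length $m-1$. The resulting tree has order $n(m-1)+1+(m-1)^2$, which strictly exceeds the analogous bound for burning number $m-1$, so it has burning number exactly $m$, is extremal, and realises diameter $6m-10$.

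For Part~2, let $T^{\ast}$ be the (isomorphically unique) $n$-spider of order $n(m-1)+1+(m-1)^2$ whose arms differ by at most one; its arm multiset is determined by the quotient and remainder of $(m-1)^2$ upon division by $n$. By Lemma~\ref{differatmostone}, $T^{\ast}$ has the smallest diameter among all $n$-spiders of this order, hence among all extremal $n$-spiders of burning number $m$. It therefore suffices to show that $T^{\ast}$ is $m$-burnable; the order argument then upgrades this to $b(T^{\ast})=m$ and extremality. Under case~(b), the tails of $T^{\ast}$ form a path forest of total order $(m-1)^2$ whose shortest path has length $\lfloor(m-1)^2/n\rfloor\geq L_n$; Definition~\ref{atleastLn} then says this forest has burning number exactly $m-1$, and its optimal sources (of radii $m-2,m-3,\dotsc,0$) together with $x_1$ placed at the head give the required sequence of length $m$ for $T^{\ast}$. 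Under case~(a), where $m=2nq+i$ with $q\geq 1$ and $i\in\{0,1,2\}$, the tails may be shorter than $L_n$ and one must instead exhibit a direct placement of $x_2,\dotsc,x_m$: the sources split into $n$ groups of sizes differing from $2q$ by at most one (depending on $i$), and a symmetric pairing such as $x_k\leftrightarrow x_{m+2-k}$ (whose ball sizes sum to $2m-2$) makes the aggregate ball size in each arm match the prescribed tail length after suitable bookkeeping near the centre of the pairing.

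The main difficulty is the explicit distribution required for Part~2(a), where Definition~\ref{atleastLn} cannot be invoked. The three residue classes $i\in\{0,1,2\}$ are precisely those for which the arithmetic works out: for $i=1$ the sources split evenly into $n$ groups of $2q$ with equal ball totals, while for $i\in\{0,2\}$ one arm has $2q\mp 1$ sources and its aggregate ball size must be adjusted by exactly one, which is the unique residue slack available. Verifying that a balanced partition always exists, and that the packed positions within each tail still satisfy the burning-sequence distance constraints $d(x_i,x_j)\geq|j-i|$, is the technical core of Part~2(a); the analogous technical point for Part~1 is the parity case of the final pairing, which is what forces the cutoff $m\leq 2n-1$ rather than $m\leq 2n$.
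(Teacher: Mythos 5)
Your overall strategy coincides with the paper's: use the Observation to pin down the order $n(m-1)+1+(m-1)^2$ and the tail structure, derive the lower bound $6m-10$ from the disjoint balls of $x_1,x_2,x_3$, realise it by an explicit arm assignment, and for Part~2 reduce to showing that the balanced spider is $m$-burnable (via $L_n$ in case (b), via an explicit partition of the ball sizes in case (a)), then invoke Lemma~\ref{differatmostone}. Part~1 and Part~2(b) of your argument are sound and essentially match the paper, with only cosmetic differences in the Part~1 construction (the paper uses $t=m-n-1$ doubly-loaded arms with pairs summing to $4t$; you use the pairs $(x_4,x_m),(x_5,x_{m-1}),\dotsc$ --- both work and use all $n$ arms exactly when $m\le 2n-1$).

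The genuine gap is in Part~2(a), precisely at the step you defer as ``suitable bookkeeping.'' Your global pairing $x_k\leftrightarrow x_{m+2-k}$ produces pairs of constant ball-sum $2m-2$, and this does give $n$ equal groups when $i=1$ (each arm takes $q$ pairs, total $q(2m-2)=(m-1)^2/n$). But for $i\in\{0,2\}$ the target tail length is $L=\lfloor (m-1)^2/n\rfloor = 4nq^2\pm 4q$, while $q$ whole pairs contribute only $q(2m-2)=4nq^2\pm 2q$; the per-arm deficit is $2q$, not the ``exactly one'' you propose to absorb, and no integer number of pairs of sum $2m-2$ (with or without the leftover singleton of size $2nq\pm1$) hits $L$ or $L+1$. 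So the constant-sum pairing cannot be repaired by a residue adjustment --- the pairs themselves must be regrouped. The paper's construction does exactly this: it splits the ball sizes $\{3,5,\dotsc,2m-3\}$ into $q$ consecutive blocks of $2n$ and pairs symmetrically \emph{within each block} ($t_{2n(k-1)+j}$ with $t_{2nk+1-j}$), so that arm $j$ receives one pair from every block; the pair sums now vary with $k$ but their totals over $k=1,\dotsc,q$ are equal to $L$ for every $j$, and the singleton ball of $x_m$ supplies the $+1$ on one arm. Without this (or an equivalent) partition, Part~2(a) is not proved. A minor further point: disjointness of the associated neighbourhoods already forces $d(x_i,x_j)\ge (m-i)+(m-j)+1> j-i$, so the distance constraints you worry about are automatic and are not where the difficulty lies.
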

	\begin{proof}
		(Proof of Part 1) Let $3\leq m\leq 2n-1$. Suppose $T$ is an extremal $n$-spider with $b(T)=m$. By the observation, it follows that  
		$$\diam(T)\geq (2m-1)+ (2m-3)+ (2m-5)-1= 6m-10,$$
		regardless of whether the second and the third burning sources are put at the same arm or not.
		
		Now, we will show that the lower bound $6m-10$ can be attained. 
		Suppose $n+2\leq m\leq 2n-1$. 
		Let $t= m- n-1$ and note that $m = 2t+ (n-t)+1$.
		Consider the $n$-spider $T$ where the first  $t$ arms have equal length of $(m-1)+4t$ and the lengths of the remaining $n-t$ arms (at least two) are $$(m-1)+(4t+1), (m-1)+ (4t+3), \dotsc, (m-1)+(2m-3).$$   
		Consider a burning sequence where the first burning source put at the head burns $m-1$ vertices from each arm in $m$ rounds. The remaining unburned vertices by the first burning source from the first $t$ arms are burned using the last $2t$ burning sources  because 
		$$4t= (4t-1) +1= (4t-3)+3= \dotsb = (2t+1)+ (2t-1).$$
		while the leftover burning sources take care of the 
		ones from the remaining arms.
		Clearly, the associated neighbourhoods are mutually disjoint and all leaves are burned in the last round. 	
		Therefore, by the observation, $T$ is an extremal $n$-spider, its burning number equals $m$, and its diameter is $6m-10$.
		
		The case $3\leq m \leq n+1$ can be argued similarly with a different but simpler $n$-spider. Hence, its proof is omitted.
		
		\bigskip
		
		(Proof of Part 2a) Consider the case $m=2nq+2$ for some fixed $q\geq 1$.	
		Let $t_{i}=2(m-i-1)+1$ for $1\leq i \leq 2nq$. It can be verified that
		$$\sum_{k=1}^{q}(t_{2n(k-1)+1}+t_{2nk})=\sum_{k=1}^{q}(t_{2n(k-1)+2}+t_{2nk-1})=\dots= \sum_{k=1}^{q}(t_{2n(k-1)+n}+t_{2nk+1-n})=L$$
		$$\text{and }\bigcup_{j=1}^n \bigcup_{k=1}^q \{ t_{2n(k-1)+j}, t_{2nk+1-j}    \}=\{t_1, t_2, t_3, \dotsc, t_{2nk}\}= \{3,5,7, \dotsc, 2m-3\}.$$
		Let $L=2mq$ and $T$ be the $n$-spider with all arm lengths equal to  $m-1+L$ except one that is $m-1+L+1$. Note that $|T|=n(m-1)+1+(nL+1)=n(m-1)+1+(m-1)^2$.
		
		By burning the head in the first round, in a burning process of $m$ rounds, consider the leftover vertices unburned by the initial burning source. They form a path forest of order $(m-1)^2$ with path orders $L,L, \dotsc, L, L+1$. Due to the above equalities, this path forest is $(m-1)$-burnable and it follows that there is a burning sequence of $T$ of length $m$ with the property that the associated neighbourhoods are mutually non-overlapping, and all leaves are burned in the last round. Therefore, by the observation, $T$ is an extremal $n$-spider among  $n$-spiders burning numbers of which are $m$.
		By Lemma~\ref{differatmostone}, $T$ is the unique $n$-spider with the smallest diameter among $n$-spiders with the same order. 	
		
		Similar arguments apply to the cases $m=2nq$ and $m=2nq+1$, and thus, the proof of this part is complete.
		
		\bigskip
		
		(Proof of Part 2b) Consider the $n$-spider $T$ of order $n(m-1)+1+(m-1)^2$ such that the length of any two arms differ by at most one. Its arm length is either $(m-1)+\left(\left\lfloor\frac{(m-1)^2}{n}\right\rfloor\right)$ or one larger. Similarly, consider a burning strategy of $T$ where we burn the head in the first round. The leftover vertices unburned by the initial burning source in $m$ rounds make a path forest of order $(m-1)^2$ with each path having order at least  $\left\lfloor\frac{(m-1)^2}{n}\right\rfloor\geq L_n$. Hence, by Definition~\ref{atleastLn}, this path forest is $(m-1)$-burnable. The rest of the argument can be completed similarly as in Part~2(a). 
	\end{proof}

	\section{Statements and Declarations}
	
	On behalf of all authors, the corresponding author states that there is no conflict of interest. Data sharing is not applicable---no data was used/generated for this work.
	The first author and the corresponding author acknowledge the financial support by the Research University Grant awarded to Wen Chean Teh by Universiti Sains Malaysia with grant number 1001/PMATHS/8011129. This work is a continuation of our work published in \cite{LeongSimTeh2023spider} and is part of the thesis submitted for the fullfillment of the requirement for the first author's Master of Science degree. To our best knowledge, the framework and results here are definitely new. The preprint of this work was posted on arXiv as arXiv:2504.20427v2.

\end{document}